\pgfplotsset{compat=newest}
\newtheorem{theorem}{Theorem}
\newtheorem{lemma}[theorem]{Lemma}
\newtheorem{proposition}[theorem]{Proposition}
\newtheorem{remark}[theorem]{Remark}
\newcommand{\patch}{\omega}
\def\di{\mathrm{d}}
\DeclareMathOperator*{\argmin}{arg\, min}
\newcommand{\proj}{\Pi}
\newcommand{\projHmOne}{\mathcal{Q}}
\newcommand{\ip}[2]{(#1\hspace*{.5mm},#2)}
\newcommand{\dual}[2]{\langle#1\hspace*{.5mm},#2\rangle}
\newcommand{\diam}{\mathrm{diam}}
\renewcommand{\div}{\operatorname{div}}
\newcommand{\divx}{\operatorname{div}_{\bx}}
\newcommand{\divst}{\operatorname{div}_{t,\bx}}
\newcommand{\gradx}{\nabla_{\bx}}
\newcommand{\Hdivset}[1]{\boldsymbol{H}(\divx;#1)}
\newcommand{\Hdivstset}[1]{\boldsymbol{H}(\divst;#1)}
\newcommand{\set}[2]{\big\{#1\,:\,#2\big\}}
\newcommand{\RT}{\ensuremath{\boldsymbol{RT}}}
\newcommand{\R}{\ensuremath{\mathbb{R}}}
\newcommand{\N}{\ensuremath{\mathbb{N}}}
\newcommand{\cT}{\ensuremath{\mathcal{T}}}
\newcommand{\cK}{\ensuremath{\mathcal{T}}}
\newcommand{\ran}{\operatorname{ran}}
\newcommand\grad{\nabla}
\newcommand{\bsigma}{{\boldsymbol\sigma}}
\newcommand{\btau}{{\boldsymbol\tau}}
\newcommand{\bchi}{{\boldsymbol\chi}}
\newcommand{\bx}{{\boldsymbol{x}}}
\newcommand{\bu}{\boldsymbol{u}}
\newcommand{\bv}{\boldsymbol{v}}
\newcommand{\bw}{\boldsymbol{w}}
\newcommand{\iop}{\mathcal{I}}
\begin{document}

\title{Aubin--Nitsche-type estimates for space-time FOSLS \\ for parabolic PDEs}
\date{\today}

\author{Thomas F\"uhrer}
\address{Facultad de Matem\'{a}ticas, Pontificia Universidad Cat\'{o}lica de Chile, Santiago, Chile}
\email{thfuhrer@uc.cl}

\author{Gregor Gantner}
\address{Institute for Numerical Simulation, University of Bonn, Bonn, Germany}
\email{gantner@ins.uni-bonn.de}

\thanks{{\bf Acknowledgment.} 
This work was supported by ANID through FONDECYT project 1210391 (TF) and the Deutsche Forschungsgemeinschaft (DFG, German Research Foundation)
under Germany's Excellence Strategy – EXC-2047/1 – 390685813 (GG)}

\keywords{Space-time FOSLS, parabolic PDEs, Aubin--Nitsche duality arguments, commuting diagram}
\subjclass[2010]{35K20, 65M12, 65M60}

\begin{abstract}
  We develop Aubin--Nitsche-type estimates for recently proposed first-order system least-squares finite element methods (FOSLS) for the heat equation.
  Under certain assumptions, which are satisfied if the spatial domain is convex and the heat source and initial datum are sufficiently smooth, we prove that the $L^2$ error of approximations of the scalar field variable converges at a higher rate than the overall error. 
  Furthermore, a higher-order conservation property is shown.
  In addition, we discuss quasi-optimality in weaker norms.
  Numerical experiments confirm our theoretical findings.
\end{abstract}
\maketitle

\section{Introduction}
In this article, we consider approximations to the minimization problem
\begin{align}\label{eq:minprob}
  \bu = (u,\bsigma) = \argmin_{\bv=(v,\btau)\in U} \|\divst\bv-f\|_{L^2(Q)}^2 + 
  \|\gradx v+\btau\|_{L^2(Q)}^2 + \|v(0)-u_0\|_{L^2(\Omega)}^2, 
\end{align}
which is equivalent to the heat equation, cf.~\cite{FuehrerKarkulik21,GantnerStevenson21},
\begin{subequations}\label{eq:heat}
\begin{alignat}{2}
  \partial_t u -\Delta_{\bx} u  &= f &\quad&\text{in } Q:=I\times\Omega, \\
  u(0)  &= u_0 &\quad&\text{in } \Omega, \\
  u  &= 0 &\quad&\text{on } I\times \partial\Omega,
\end{alignat}
\end{subequations}
and $\bsigma = -\gradx u$.
Here, $\Omega\subseteq \R^d$ ($d=1,2,3$) denotes a bounded Lipschitz domain, $I=(0,T)$ a finite time interval with (fixed) end time $T>0$, $f\in L^2(\Omega)$ the volumetric heat source, and $u_0\in L^2(\Omega)$ the initial temperature distribution.
Further, $\divst(v,\btau) = \partial_t v + \divx\btau$ denotes the space-time divergence and $U$ is a Hilbert space given by
\begin{align*}
  U := \Hdivstset{Q} \cap \big(L^2( I;H_0^1(\Omega))\times L^2(Q)^d\big).
\end{align*}
Approximations are obtained through least-squares finite element methods, which consist in replacing the space $U$ in~\eqref{eq:minprob} by a discrete subspace $U_h\subseteq U$ of finite element functions. 
This yields the first-order system least-squares method (FOSLS) 
\begin{align}\label{eq:fosls}
\bu_h = (u_h,\bsigma_h) = \argmin_{\bv=(v,\btau)\in U_h} \|\divst\bv-f\|_{L^2(Q)}^2 + 
\|\gradx v+\btau\|_{L^2(Q)}^2 + \|v(0)-u_0\|_{L^2(\Omega)}^2.
\end{align}
In~\cite{FuehrerKarkulik21,GantnerStevenson21} it was shown that~\eqref{eq:fosls} admits a unique solution and is quasi-optimal with respect to the (graph) norm on $U$.

The motivation of this work is to develop Aubin--Nitsche-type duality arguments to prove (in Theorem~\ref{thm:L2est}) under certain regularity assumptions that
\begin{align}\label{eq:L2est:general}
  \|u-u_h\|_{L^2(Q)} \le \gamma(h)\|\bu-\bu_h\|_U,
\end{align}
where $0\leq \gamma(h)\to 0$ for $h\to 0$ and $h$ denotes a discretization parameter (mesh-size).
Thus, the $L^2(Q)$ error converges faster than the error in the whole $U$ norm.
With similar techniques, we show our second main result (Theorem~\ref{thm:conservation}), 
\begin{align}\label{eq:conservation}
  \|\proj f-\divst\bu_h\|_{L^2(Q)} \le \gamma(h)\|\bu-\bu_h\|_U,
\end{align}
where $\proj\colon L^2(Q)\to \ran(\divst|_{U_h})$ denotes the orthogonal projection.
For elliptic model problems similar estimates can be interpreted as higher-order mass conservation, see, e.g.,~\cite{BrandtsChenYang06}.

While the Aubin--Nitsche trick is a well-established tool for variational methods for elliptic problems, 
the situation is different in the parabolic case.
For elliptic problems, $L^2$ norm error estimates for FOSLS have been studied in~\cite{CaiKu2006,Ku11,MINRESsingular,MR4532778,MR4784402}.
These works, as is usual for techniques based on duality arguments, rely on elliptic regularity results. 
For parabolic problems though, regularity results are of anisotropic nature (derivatives with respect to time have different regularities than spatial derivatives). 
This has to be taken into account in the analysis by considering certain intermediate spaces.
Our results also rely on recent findings on approximation properties and boundedness of interpolation operators~\cite{StevensonStorn23} for the finite element space introduced in~\cite{GantnerStevenson24}.

Building on stability properties of these interpolation operators, and as in \cite{FuehrerMixedFEMFOSLS} for the stationary case, we finally note a quasi-optimality without the divergence term in the $U$-norm, i.e.,
\begin{align}\label{eq:quasioptX}
    \|u-u_h\|_X + \|\bsigma-\bsigma_h\|_{L^2(Q)} \lesssim \min_{(v_h,\btau_h)\in U_h}
    \|u-v_h\|_X + \|\bsigma-\btau_h\|_{L^2(Q)},
  \end{align}
if $f$ is replaced for the computation of $u_h$ by $\projHmOne f$ with some suitable projection $\projHmOne$.
This observation has already been briefly noted in~\cite[Remark~19]{StevensonStorn23} to some extent, and, here, we provide some details of the proof.

\subsection{Outline}
The remainder of this work is organized as follows: 
In Section~\ref{sec:bochner-sobolev}, we recalll Bochner--Sobolev spaces as well as unique solvability of the heat equation. 
In Section~\ref{sec:regularity}, we further recall a standard parabolic regularity result. 
Section~\ref{sec:fosls} introduces the variational formulation of~\eqref{eq:minprob} along with a discrete version.
Next, in Section~\ref{sec:meshes},  we define regular tensor-product meshes and recall the  trial space of~\cite{GantnerStevenson24}.
In Section~\ref{sec:iop}, we state the employed properties of the interpolation operators of \cite{StevensonStorn23}.
We present our first two main results~\eqref{eq:L2est:general} and~\eqref{eq:conservation} in Section~\ref{sec:l2_estimates}.
Section~\ref{sec:observations} gives at least some partial explanation why higher rates can also be expected for the $L^2$ error at final time $T$.
In Section~\ref{sec:quasi_optimality}, we finally exploit stability properties of the interpolation operators to conclude the quasi-optimality~\eqref{eq:quasioptX} without the divergence term. 
The paper is concluded by numerical experiments in Section~\ref{sec:observations}, confirming our theoretical findings.

\subsection{General notation}
We write $A\lesssim B$ to abbreviate $A\le CB$ with some generic constant $C>0$ which is clear from the context, and which might only depend on the considered spatial domain $\Omega$, the end time $T$, and quasi-uniformity~\eqref{eq:quasi-uniform} and shape regularity~\eqref{eq:shape-regular} of the considered space-time mesh. 
Moreover, $A\eqsim B$ abbreviates $A\lesssim B\lesssim A$.

\section{Preliminaries}\label{sec:analysis}

\subsection{Bochner--Sobolev spaces}\label{sec:bochner-sobolev}
For Lipschitz domains $\omega\subset \R^d$ (or $\omega\subset\R^{d+1}$), we denote by $L^2(\omega)$ the Lebesgue space of square-integrable functions equipped with norm $\|\cdot\|_{L^2(\omega)}$ and inner product $\ip\cdot\cdot_{L^2(\omega)}$. 
With $H^k(\omega)$, we denote Sobolev spaces of order $k\in\N$ and $H_0^1(\omega)$ is the closed subspace of $H^1(\omega)$ with vanishing trace.
We equip $H_0^1(\omega)$ with norm $\|\gradx(\cdot)\|_{L^2(\omega)}$.
The dual space of $H_0^1(\omega)$ is $H^{-1}(\omega)$, where duality is understood with respect to the extended $L^2(\omega)$ inner product.

For strongly measurable $v\colon I\to H$, where $H$ is a Banach space with norm $\|\cdot\|_H$, we define
\begin{align*}
  \|v\|_{L^2( I;H)}^2 := \int_0^T \|v(t)\|_H^2 \,\di t
\end{align*}
and denote by $L^2( I; H)$ the Bochner space of all such $v$ with $\|v\|_{L^2( I;H)}<\infty$.
For an introduction and basic results on Bochner spaces, we refer the interested reader, e.g., to \cite{DL92}.
We note that $L^2( I; L^2(\Omega))$ is isometrically isomorphic to $L^2(Q)$, and we identify these two spaces in the remainder of this article.
With $H^1( I;H)$ we denote the space of all $v\in L^2( I;H)$ such that $\partial_t v\in L^2( I;H)$.
Note the continuous embedding $H^1( I;H) \hookrightarrow C^0(\overline I;H)$, see, e.g., \cite[Sec.~5.9, Theorem~2]{Evans98}, where the latter space denotes the space of functions $v\colon \overline I\to H$ which are continuous in time with values in $H$.
For $s\in (0,1)$, we define the intermediate spaces $H^s( I;H)$ by interpolation between $L^2( I; H)$ and $H^1( I; H)$.
We use similar notation for open subintervals $J\subseteq I$ of $I$.

The natural trial space for the weak formulation of the heat equation is given by
\begin{align*}
  X := L^2( I; H_0^1(\Omega)) \cap H^1( I;H^{-1}(\Omega)).
\end{align*}
We first recall the continuous embedding 
\begin{align}\label{eq:X2C0}
  X \hookrightarrow C^0(\overline I;L^2(\Omega))
\end{align}
from, e.g., \cite[Sec.~5.9, Theorem~3]{Evans98}, and then  the following theorem from, e.g., \cite[Theorem~5.1]{ss09}.

\begin{theorem}\label{thm:ss09}
  For given $f \in L^2(I;H^{-1}(\Omega))$ and $u_0\in L^2(\Omega)$, there exists a unique weak solution $u\in X$ of the heat equation~\eqref{eq:heat}. 
  It satisfies the stability estimate
  \begin{align}\label{eq:X2RHS}
    \| u \|_X \lesssim \| f \|_{L^2(I;H^{-1}(\Omega))} + \| u_0 \|_{L^2(\Omega)}. 
  \end{align}
  \qed
\end{theorem}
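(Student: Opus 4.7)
The plan is the classical Lions--type Galerkin approach. The weak formulation corresponding to~\eqref{eq:heat} reads: find $u\in X$ with $u(0)=u_0$ in $L^2(\Omega)$ (which makes sense by~\eqref{eq:X2C0}) and
\begin{equation*}
  \int_0^T \dual{\partial_t u(t)}{v(t)}\,\di t + \int_0^T \ip{\gradx u(t)}{\gradx v(t)}{L^2(\Omega)}\,\di t = \int_0^T \dual{f(t)}{v(t)}\,\di t \quad \forall v\in L^2(I;H_0^1(\Omega)).
\end{equation*}
I would first fix an $L^2(\Omega)$-orthonormal basis $(w_k)_{k\in\N}$ of $H_0^1(\Omega)$ consisting of Dirichlet Laplacian eigenfunctions, set $V_m:=\linhull\{w_1,\dots,w_m\}$, and look for a Galerkin approximation $u_m(t)=\sum_{k=1}^m c_k^m(t) w_k$ satisfying the weak formulation tested against every $v\in V_m$ (pointwise in $t$) together with $u_m(0)=\proj_{V_m}u_0$, the $L^2(\Omega)$-projection. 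Existence of $u_m$ reduces to a linear ODE system in the coefficients $c_k^m$.

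Next I would derive two a priori estimates uniform in $m$. Testing the Galerkin equation with $u_m(t)$ and integrating in $t$, combined with the identity $\frac{\di}{\di t}\|u_m\|_{L^2(\Omega)}^2=2\dual{\partial_t u_m}{u_m}$ and Young's inequality applied to $\dual{f}{u_m}\le \|f\|_{H^{-1}(\Omega)}\|\gradx u_m\|_{L^2(\Omega)}$, yields
\begin{equation*}
  \|u_m\|_{L^\infty(I;L^2(\Omega))} + \|u_m\|_{L^2(I;H_0^1(\Omega))} \lesssim \|u_0\|_{L^2(\Omega)} + \|f\|_{L^2(I;H^{-1}(\Omega))}.
\end{equation*}
For the time derivative, note that $\partial_t u_m(t)\in V_m$, and for arbitrary $v\in H_0^1(\Omega)$, splitting $v=v_m+v^\perp$ with $v_m\in V_m$ (orthogonal in $H_0^1(\Omega)$, which keeps $\|v_m\|_{H_0^1(\Omega)}\le \|v\|_{H_0^1(\Omega)}$ thanks to the eigenbasis choice), the Galerkin equation gives $\dual{\partial_t u_m(t)}{v} = -\ip{\gradx u_m(t)}{\gradx v_m}{L^2(\Omega)}+\dual{f(t)}{v_m}$. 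Taking the supremum over $v$ and integrating in $t$ delivers the matching $L^2(I;H^{-1}(\Omega))$ bound on $\partial_t u_m$.

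These uniform bounds let me extract a subsequence $u_{m_j}\rightharpoonup u$ in $X$. Passing to the limit in the Galerkin equation against a fixed $v\in\bigcup_m V_m$ (which is dense in $H_0^1(\Omega)$) shows that $u$ solves the weak formulation. The initial condition $u(0)=u_0$ follows by integrating by parts in time against a function $\varphi\in C^1(\overline I;H_0^1(\Omega))$ with $\varphi(T)=0$, using~\eqref{eq:X2C0} to make sense of $u(0)$, and comparing with the analogous identity for $u_m$. The stability estimate~\eqref{eq:X2RHS} is inherited from the uniform Galerkin bounds via weak lower semicontinuity of the norm.

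Finally, uniqueness (hence also~\eqref{eq:X2RHS} via linearity) follows by testing the difference $w\in X$ of two solutions (with $f=0$, $u_0=0$) against itself: using $\frac{\di}{\di t}\|w\|_{L^2(\Omega)}^2=2\dual{\partial_t w}{w}$, valid for $w\in X$ by~\eqref{eq:X2C0}, yields $\|w(t)\|_{L^2(\Omega)}^2+2\int_0^t\|\gradx w\|_{L^2(\Omega)}^2=0$ and hence $w\equiv 0$. The main technical obstacle I anticipate is the justification of this energy identity and of the initial trace for $X$-functions; this is exactly where the embedding~\eqref{eq:X2C0} (and an approximation argument by smooth Bochner functions) is indispensable. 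Since all of these steps are standard and the statement is only cited, I would in the paper itself simply refer to~\cite[Theorem~5.1]{ss09} and~\cite[Sec.~7.1]{Evans98}.
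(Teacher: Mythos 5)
Your Galerkin argument is correct; it is the textbook Faedo--Galerkin construction (Evans, Sec.~7.1) and all the technical ingredients you flag (the eigenbasis so that the $L^2$- and $H_0^1$-orthogonal projections onto $V_m$ coincide, the density argument for the time-derivative bound, the validity of $\frac{\di}{\di t}\|w\|_{L^2(\Omega)}^2 = 2\dual{\partial_t w}{w}$ for $w\in X$ via the embedding~\eqref{eq:X2C0}) are exactly the right points to take care of. Note, however, that the paper itself does not prove Theorem~\ref{thm:ss09}: it is stated with a \texttt{\textbackslash qed} as a citation of~\cite[Theorem~5.1]{ss09}. That reference establishes well-posedness by a different, functional-analytic route, namely by showing that the space-time weak formulation $B(u,\cdot) = \dual{f}{\cdot} + \ip{u_0}{\cdot}{L^2(\Omega)}$ satisfies the inf-sup (Banach--Ne\v{c}as--Babu\v{s}ka) conditions on $X\times\bigl(L^2(I;H_0^1(\Omega))\times L^2(\Omega)\bigr)$, which yields existence, uniqueness and the two-sided stability bound all at once. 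The Galerkin approach you propose is more elementary and self-contained; the inf-sup approach of~\cite{ss09} is the natural one in the space-time discretization setting of this paper because it also underlies the quasi-optimality and conditioning analysis used later. One small wording issue in your write-up: the stability estimate~\eqref{eq:X2RHS} already follows from weak lower semicontinuity for the constructed solution; the parenthetical ``hence also~\eqref{eq:X2RHS} via linearity'' attached to the uniqueness step is redundant and slightly misleading, since what uniqueness really buys you is that the estimate holds for \emph{every} weak solution, not just the one built by the Galerkin limit.
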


Recall that 
\begin{align*}
  U := \Hdivstset{Q} \cap \big(L^2( I;H_0^1(\Omega))\times L^2(Q)^d\big),
\end{align*}
which we equip with the squared norm
\begin{align*}
  \|\bv\|_U^2 = \|\gradx v\|_{L^2(Q)}^2 + \|\btau\|_{L^2(Q)}^2 + \|\divst\bv\|_{L^2(Q)}^2 
  \quad\forall \bv=(v,\btau)\in U.
\end{align*}
According to \cite[Lemma~2.2]{GantnerStevenson21}, it holds for any $\bv=(v,\btau)\in U$ that $v\in X$ with
\begin{align}\label{eq:X2U}
  \| v \|_X \lesssim \| \bv \|_U.
\end{align}
As already mentioned in the introduction, \eqref{eq:minprob} and \eqref{eq:heat} are equivalent, cf.\ \cite{FuehrerKarkulik21,GantnerStevenson21}. 

\begin{theorem}
Let $f\in L^2(Q)$ and $u_0\in L^2(\Omega)$. 
Then, the unique solution $u\in X$ of the heat equation~\eqref{eq:heat} satisfies that $\bu:=(u, -\gradx u)\in U$ and $\bu$ solves minimization problem~\eqref{eq:minprob}.
Conversely, if $\bu=(u, \bsigma)\in U$ is a solution of~\eqref{eq:minprob}, then $u\in X$ and $u$ solves~\eqref{eq:heat} with $\bsigma = -\gradx u$. 
\qed
\end{theorem}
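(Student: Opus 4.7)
The plan is to establish both directions by using the characterization of minimizers of the least-squares functional: since the functional is a sum of three squared $L^2$ norms bounded below by zero, a minimizer achieving value zero is equivalent to all three residuals vanishing.

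\textbf{Forward direction.} Suppose $u\in X$ is the unique weak solution of \eqref{eq:heat} granted by Theorem~\ref{thm:ss09}. Set $\bsigma:=-\gradx u$, which lies in $L^2(Q)^d$ since $u\in L^2(I;H_0^1(\Omega))$. Then $\divst\bu=\partial_t u-\Delta_\bx u=f\in L^2(Q)$ by \eqref{eq:heat}, so $\bu=(u,\bsigma)\in\Hdivstset Q$, and together with $u\in L^2(I;H_0^1(\Omega))$ this yields $\bu\in U$. The embedding \eqref{eq:X2C0} makes $u(0)\in L^2(\Omega)$ meaningful, and the initial condition in \eqref{eq:heat} gives $u(0)=u_0$. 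Plugging $\bu$ into the least-squares functional in \eqref{eq:minprob} shows all three summands vanish, so the functional value is zero. Since the functional is a sum of squared norms, zero is a global infimum, and $\bu$ is a minimizer.

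\textbf{Converse direction.} Let $\bu=(u,\bsigma)\in U$ be a minimizer of \eqref{eq:minprob}. By the forward direction, a minimizer achieving functional value zero exists, so the minimum equals zero and each of the three residuals of $\bu$ must vanish:
\begin{align*}
  \divst\bu=f,\qquad \gradx u+\bsigma=0,\qquad u(0)=u_0.
\end{align*}
The identity \eqref{eq:X2U} (from \cite[Lemma~2.2]{GantnerStevenson21}) gives $u\in X$, so by \eqref{eq:X2C0} the initial trace $u(0)$ is well defined in $L^2(\Omega)$. Substituting $\bsigma=-\gradx u$ into $\divst\bu=\partial_t u+\divx\bsigma=f$ yields $\partial_t u-\Delta_\bx u=f$ in the sense of distributions on $Q$, while $u\in L^2(I;H_0^1(\Omega))$ encodes the homogeneous Dirichlet condition. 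Together with $u(0)=u_0$, this shows $u$ is a weak solution of~\eqref{eq:heat}, and uniqueness from Theorem~\ref{thm:ss09} identifies it with the one constructed in the forward direction.

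\textbf{Main obstacle.} The only subtle point is making the three identities above rigorous: one needs the embedding \eqref{eq:X2C0} to give pointwise-in-time meaning to $u(0)$, and one needs \eqref{eq:X2U} to pass from $\bu\in U$ to $u\in X$ so that this embedding applies. Beyond these two ingredients, both already recorded in the preliminaries, the argument is a direct consequence of the observation that the minimum of the non-negative least-squares functional is zero.
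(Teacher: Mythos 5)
Your proof is correct. The paper does not prove this theorem itself but cites it to \cite{FuehrerKarkulik21,GantnerStevenson21}; your argument --- the least-squares functional is a sum of non-negative squares, the weak solution realises the value zero, hence the minimum is zero, hence every minimizer has all three residuals vanish and is identified by uniqueness with the weak solution --- is the standard one, and the two ingredients you flag (\eqref{eq:X2C0} to give meaning to the initial trace, and \eqref{eq:X2U} to pass from $\bu\in U$ to $u\in X$) are exactly the preliminaries the paper records in Section~\ref{sec:bochner-sobolev} for this purpose.
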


\subsection{Parabolic regularity}\label{sec:regularity}

The next result states that the weak solution $u\in X$ of the heat equation is even in the space
\begin{align*}
  Y := L^2( I; H^2(\Omega)) \cap C^0( \overline I; H_0^1(\Omega)) \cap H^1( I; L^2(\Omega))
\end{align*}
under additional regularity assumptions.
The proof follows the lines of argumentation of~\cite[Sec.~7.1, Theorem~5]{Evans98}.
We note that in~\cite{Evans98}, $\Omega$ is assumed to have smooth boundary. This is only required to deduce elliptic regularity which can also be shown by considering convex Lipschitz domains as is done below. 
Another difference is that~\cite[Sec.~7.1, Theorem~5]{Evans98} states the improved regularity $u\in L^\infty( I;H_0^1(\Omega))$, while actually the slightly stronger statement $u \in C^0(\overline I;H_0^1(\Omega))$ holds true (this is a simple consequence of, e.g.,~\cite[Ch.~XVIII, Eq.(1.61)]{DL92} and the fact that the intermediate interpolation space between $H^2(\Omega)$ and $L^2(\Omega)$ is $H^1(\Omega)$ for a Lipschitz domain $\Omega$). 

\begin{proposition}[Parabolic regularity]\label{prop:parabolicreg}
  Suppose that $\Omega$ is convex.
  Let $u\in X$ be the unique weak solution of~\eqref{eq:heat} with $f\in L^2(Q)$ and $u_0\in H_0^1(\Omega)$. 
  Then, $u\in Y$ with
  \begin{align*}
    \|u\|_{Y} \lesssim \|f\|_{L^2(Q)} + \|\gradx u_0\|_{L^2(\Omega)}.
  \end{align*}
  \qed
\end{proposition}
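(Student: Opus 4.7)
The plan is to mimic the argument of \cite[Sec.~7.1, Theorem~5]{Evans98}, but (i) replace the use of smoothness of $\partial\Omega$ by the $H^2$-regularity of the stationary Dirichlet Laplacian on convex Lipschitz domains (a classical result of Grisvard), and (ii) upgrade the $L^\infty(I;H_0^1(\Omega))$-conclusion to continuity in time via interpolation between $H^2(\Omega)$ and $L^2(\Omega)$. Throughout, I would first perform all bounds on a Galerkin approximation $u_m$ built from (say) the $H_0^1$-orthonormal eigenfunctions $\{w_k\}$ of $-\Delta$ on $\Omega$, so that algebraic manipulations and testings are rigorous; the estimates that are uniform in $m$ then pass to $u$ by weak(-star) compactness.

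The energy estimate. I would test the Galerkin ODE with $u_m'(t)=\partial_t u_m$, integrate in time from $0$ to $T$ and use $\int_0^T \ip{-\Delta u_m(t)}{u_m'(t)}_{L^2(\Omega)}\,\di t = \tfrac12\|\gradx u_m(T)\|_{L^2(\Omega)}^2 - \tfrac12\|\gradx u_m(0)\|_{L^2(\Omega)}^2$. Combined with the Cauchy--Schwarz/Young inequality applied to $\int_0^T \ip{f}{u_m'}_{L^2(\Omega)}\,\di t$, this yields, after absorbing $\tfrac12\|u_m'\|_{L^2(Q)}^2$ to the left, a bound
\begin{align*}
\|\partial_t u_m\|_{L^2(Q)}^2 + \|\gradx u_m\|_{L^\infty(I;L^2(\Omega))}^2 \lesssim \|f\|_{L^2(Q)}^2 + \|\gradx u_0\|_{L^2(\Omega)}^2,
\end{align*}
where I use that $u_m(0)$ is the $H_0^1$-orthogonal projection of $u_0$ onto $\linhull\{w_1,\dots,w_m\}$, so $\|\gradx u_m(0)\|_{L^2(\Omega)}\le \|\gradx u_0\|_{L^2(\Omega)}$. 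Passing to the limit $m\to\infty$ gives $\partial_t u\in L^2(Q)$ and $\gradx u\in L^\infty(I;L^2(\Omega))$ with the same estimate.

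Spatial $H^2$-regularity. At a.e.\ $t\in I$ the equation reads $-\Delta u(t) = f(t)-\partial_t u(t)$ pointwise in $H^{-1}(\Omega)$, but the right-hand side is now in $L^2(\Omega)$ with $\|f(t)-\partial_t u(t)\|_{L^2(\Omega)}$ in $L^2(I)$. Since $\Omega$ is a bounded convex Lipschitz domain, the classical elliptic regularity result of Grisvard (see, e.g., \cite[Thm.~3.2.1.2]{Grisvard}) gives that the solution of the homogeneous Dirichlet Laplace problem lies in $H^2(\Omega)\cap H_0^1(\Omega)$ with $\|w\|_{H^2(\Omega)}\lesssim\|\Delta w\|_{L^2(\Omega)}$. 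Applied at almost every $t$ and integrated in time,
\begin{align*}
\|u\|_{L^2(I;H^2(\Omega))}^2 \lesssim \|f-\partial_t u\|_{L^2(Q)}^2 \lesssim \|f\|_{L^2(Q)}^2+\|\gradx u_0\|_{L^2(\Omega)}^2.
\end{align*}

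Continuity in time into $H_0^1(\Omega)$. Having $u\in L^2(I;H^2(\Omega))\cap H^1(I;L^2(\Omega))$, the space--time trace/interpolation argument of \cite[Ch.~XVIII, Eq.~(1.61)]{DL92}, together with the fact that the real interpolation space $[H^2(\Omega),L^2(\Omega)]_{1/2}$ equals $H^1(\Omega)$ for Lipschitz $\Omega$, yields $u\in C^0(\overline I; H^1(\Omega))$. The zero boundary condition on $\partial\Omega$ is preserved in the limit, so in fact $u\in C^0(\overline I; H_0^1(\Omega))$. Combining the three bounds gives the asserted estimate.

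The main obstacle is the spatial elliptic regularity step: on merely convex Lipschitz (not smooth) $\Omega$, one cannot quote Evans verbatim and has to invoke Grisvard's $H^2$-regularity on convex polyhedra/Lipschitz convex domains. Everything else (Galerkin approximation, energy estimate, interpolation upgrade to $C^0$) is standard once this ingredient is in place.
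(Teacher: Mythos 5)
Your proposal matches the paper's own (sketched) argument: the authors state that the proof follows \cite[Sec.~7.1, Theorem~5]{Evans98}, with the smoothness of $\partial\Omega$ replaced by Grisvard's $H^2$-regularity on convex domains for the spatial elliptic step, and the $L^\infty(I;H_0^1(\Omega))$ conclusion upgraded to $C^0(\overline I;H_0^1(\Omega))$ via \cite[Ch.~XVIII, Eq.~(1.61)]{DL92} together with $[H^2(\Omega),L^2(\Omega)]_{1/2}=H^1(\Omega)$ on a Lipschitz domain. Your Galerkin energy estimate, elliptic regularity step, and interpolation upgrade are precisely these ingredients, so the approach is the same.
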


\begin{remark}
  Note that \cite{Evans98} considers general second-order parabolic PDEs with smooth coefficients. 
  Again, this result readily extends to Lipschitz domains $\Omega$. 
  As this is the crucial ingredient for our Aubin--Nitsche-type estimates (of Theorem~\ref{thm:L2est} and Theorem \ref{thm:conservation}), we stress that the analysis of the present work holds equally well true for general second-order parabolic PDEs with smooth coefficients;
  see also~\cite{GantnerStevenson21} for least-squares formulations in this case. 
  \qed
\end{remark}

\begin{remark}\label{rem:parabolicreg}
  The statements of Proposition~\ref{prop:parabolicreg} hold true with obvious modifications if we consider the (weak formulation of the) backward heat equation
  \begin{align*}
    -\partial_t u - \Delta_\bx u &= f, \\
    u(T) &= u_T, \\
    u|_{I\times\partial\Omega} &= 0
  \end{align*}
  with $f\in L^2(Q)$ and $u_T\in H_0^1(\Omega)$.
\end{remark}

We will also employ the following result.
\begin{lemma}\label{lem:embedding}
  The embedding $Y\hookrightarrow  H^{1/2}( I;H^1(\Omega))$ is continuous. 
  \qed
\end{lemma}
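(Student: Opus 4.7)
The plan is to reduce the claim to a one-dimensional Fourier argument in the time variable combined with a standard interpolation inequality in the spatial variable. The starting point is the observation that the $C^0(\overline I;H_0^1(\Omega))$ part of $Y$ is irrelevant for the stated embedding; only $L^2(I;H^2(\Omega))$ and $H^1(I;L^2(\Omega))$ are needed.

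\textbf{Step 1 (extension in time).} I would first construct a bounded extension operator
$$E:\,L^2(I;H^2(\Omega))\cap H^1(I;L^2(\Omega))\to L^2(\R;H^2(\Omega))\cap H^1(\R;L^2(\Omega)),$$
say by higher-order reflection across $t=0$ and $t=T$ followed by multiplication with a smooth temporal cutoff. The resulting $v:=Eu$ satisfies
$\|v\|_{L^2(\R;H^2(\Omega))}+\|v\|_{H^1(\R;L^2(\Omega))}\lesssim \|u\|_Y.$
Such extensions respect the interpolation scale, so they also realize $H^{1/2}(I;H^1(\Omega))$ as a retract of $H^{1/2}(\R;H^1(\Omega))$.

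\textbf{Step 2 (Fourier characterization).} Denote by $\hat v(\tau)$ the temporal Fourier transform of $v$, valued in the corresponding spatial Hilbert space. By Plancherel,
$\|v\|_{L^2(\R;H^2(\Omega))}^2=\int_{\R}\|\hat v(\tau)\|_{H^2(\Omega)}^2\,\di\tau$ and
$\|v\|_{H^1(\R;L^2(\Omega))}^2\eqsim \int_{\R}(1+\tau^2)\|\hat v(\tau)\|_{L^2(\Omega)}^2\,\di\tau$,
and since $H^{1/2}(\R;H^1(\Omega))$ is defined by interpolation between $L^2(\R;H^1(\Omega))$ and $H^1(\R;H^1(\Omega))$, it is characterized by
$\|v\|_{H^{1/2}(\R;H^1(\Omega))}^2\eqsim \int_{\R}(1+|\tau|)\|\hat v(\tau)\|_{H^1(\Omega)}^2\,\di\tau.$

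\textbf{Step 3 (spatial interpolation and Young's inequality).} The classical interpolation estimate
$\|w\|_{H^1(\Omega)}^2\lesssim \|w\|_{L^2(\Omega)}\,\|w\|_{H^2(\Omega)}$
(valid on any bounded Lipschitz domain, and easily verified by integration by parts when $w$ is smooth enough and one of the factors controls the lower-order terms) applied pointwise in $\tau$ to $\hat v(\tau)$, followed by Young's inequality, yields
$$(1+|\tau|)\|\hat v(\tau)\|_{H^1(\Omega)}^2\lesssim \|\hat v(\tau)\|_{H^2(\Omega)}^2+(1+\tau^2)\|\hat v(\tau)\|_{L^2(\Omega)}^2.$$

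\textbf{Step 4 (integrate and restrict).} Integrating over $\tau$ and invoking Step 2 gives
$\|v\|_{H^{1/2}(\R;H^1(\Omega))}\lesssim \|v\|_{L^2(\R;H^2(\Omega))}+\|v\|_{H^1(\R;L^2(\Omega))}\lesssim \|u\|_Y.$
Restricting to $I$ and using the retract property from Step 1 concludes
$\|u\|_{H^{1/2}(I;H^1(\Omega))}\lesssim \|u\|_Y.$
The only genuine obstacle is the compatibility of the extension with the interpolation definition of $H^{1/2}(I;H^1(\Omega))$; I expect this to be handled by a standard reflection-plus-cutoff operator which is bounded simultaneously on $L^2(I;H^1(\Omega))$ and $H^1(I;H^1(\Omega))$, so that by interpolation it is bounded on every intermediate space and, by the retract theorem, induces the correct norm equivalence.
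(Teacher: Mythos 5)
Your proof is correct, but it takes a more elementary, first-principles route than the paper. The paper simply cites \cite[Ch.~1, Remark~9.5]{LionsMagenesI} for the abstract identification $[L^2(I;H^2(\Omega)),H^1(I;L^2(\Omega))]_{1/2}=H^{1/2}(I;H^1(\Omega))$, combines it with $[H^2(\Omega),L^2(\Omega)]_{1/2}=H^1(\Omega)$, and concludes with the standard interpolation inequality $\|u\|_{H^{1/2}(I;H^1)}\le\|u\|_{L^2(I;H^2)}^{1/2}\|u\|_{H^1(I;L^2)}^{1/2}$. What you have done instead is unpack the mechanism behind that Lions--Magenes result: extend to the whole real line in time, pass to the temporal Fourier transform, apply the spatial interpolation estimate $\|w\|_{H^1}^2\lesssim\|w\|_{L^2}\|w\|_{H^2}$ pointwise in the frequency variable, and close with Young's inequality. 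Both proofs therefore rest on the same spatial interpolation fact; the difference is that the paper treats the Bochner-space interpolation identification as a black box, whereas you reprove it explicitly. Your approach buys self-containment and transparency (the reader sees exactly where the anisotropic exponent $1/2$ in time is generated), at the cost of having to check the bookkeeping: the simultaneous boundedness of the reflection-plus-cutoff extension on $L^2(I;H^2)$ and $H^1(I;L^2)$, the Fourier characterization of the vector-valued $H^{1/2}(\R;\cdot)$ norm, and the retract argument that transfers the estimate from $\R$ back to $I$. All of these are standard and hold as you describe, so there is no gap; the only slightly glib remark is that the spatial interpolation bound is ``easily verified by integration by parts'' --- for arbitrary $H^2(\Omega)$ functions on a Lipschitz domain one really needs a total extension operator to $\R^d$ rather than a naive integration by parts, but this is exactly the same classical fact the paper also invokes.
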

\begin{proof}
 We recall that $[H^2(\Omega),L^2(\Omega)]_{1/2}= H^1(\Omega)$ with equivalent norms, where $[A,B]_{s}$, $s\in(0,1)$ denotes the interpolation space of a compatible pair of Hilbert spaces $A,B$.
    By~\cite[Ch.1, Remark~9.5]{LionsMagenesI}, it follows that
    \begin{align*}
      [L^2(I;H^2(\Omega)),H^1(I;L^2(\Omega))]_{1/2} = H^{1/2}(I;H^1(\Omega)).
    \end{align*}
    Therefore, $Y$ is included in $H^{1/2}(I;H^1(\Omega))$. Continuity of the inclusion follows from the standard interpolation estimate
    \begin{align*}
      \|u\|_{H^{1/2}(I;H^1(\Omega))} \leq \|u\|_{L^2(I;H^2(\Omega))}^{1/2}\|u\|_{H^1(I;L^2(\Omega))}^{1/2}
      \leq \|u\|_Y,
  \end{align*}
which finishes the proof.
\end{proof}

\subsection{Variational formulation of FOSLS}\label{sec:fosls}
Define the bilinear form $a\colon U\times U \to \R$ for $\bu=(u,\bsigma), \bv=(v,\btau)\in U$ by
\begin{align*}
  a(\bu,\bv) := \ip{\divst\bu}{\divst\bv}_{L^2(Q)} + \ip{\gradx u+\bsigma}{\gradx v+\btau}_{L^2(Q)} + \ip{u(0)}{v(0)}_{L^2(\Omega)}.
\end{align*}
Further, given $f\in L^2(Q)$, $u_0\in L^2(\Omega)$, we introduce the linear functional $F\colon U\to \R$ for $\bv =(v,\btau)\in U$ by
\begin{align*}
  F(\bv) := \ip{f}{\divst\bv}_{L^2(Q)} + \ip{u_0}{v(0)}_{L^2(\Omega)}.
\end{align*}
The bilinear form $a(\cdot,\cdot)$ as well as the functional $F(\cdot)$ are bounded, i.e.,
\begin{align*}
  |a(\bu,\bv)|\lesssim \|\bu\|_U\|\bv\|_U, \qquad |F(\bv)| \lesssim (\|f\|_{L^2(Q)} + \|u_0\|_{L^2(\Omega)})\|\bv\|_U
  \qquad \forall\bu,\bv\in U.
\end{align*}

The variational formulation (Euler--Lagrange equations) of minimization problem~\eqref{eq:minprob} reads:
\begin{align}\label{eq:EL}
  \text{Find }\bu\in U: \qquad
  a(\bu,\bv) = F(\bv) \quad\forall \bv\in U.
\end{align}
Similarily, the variational formulation of problem~\eqref{eq:fosls} reads:
\begin{align}\label{eq:EL:disc}
  \text{Find } \bu_h\in U_h: \qquad
  a(\bu_h,\bv_h) = F(\bv_h) \quad\forall \bv_h\in U_h.
\end{align}
Solutions to~\eqref{eq:EL} and~\eqref{eq:EL:disc} satisfy the Galerkin orthogonality,
\begin{align}\label{eq:galerkinorth}
  a(\bu-\bu_h,\bv_h) = 0 \quad\forall \bv_h\in U_h
\end{align}
which is an important ingredient in the proofs of our main results. 

Formulations~\eqref{eq:EL} and~\eqref{eq:EL:disc} have been studied thouroughly in~\cite{FuehrerKarkulik21,GantnerStevenson21}. 
We recall some important observations of these works.
\begin{proposition}\label{prop:fosls}
  Let $f\in L^2(Q)$ and $u_0\in L^2(\Omega)$ be given. 
  Minimization problems~\eqref{eq:minprob} and~\eqref{eq:fosls} are equivalent to formulations~\eqref{eq:EL} and~\eqref{eq:EL:disc}, respectively. 
  Moreover, the bilinear form $a(\cdot,\cdot)$ is coercive, i.e., 
  \begin{align}\label{eq:coercivity}
  \| \bu \|_U^2 \lesssim a(\bu,\bu) \quad \forall \bu \in U.
  \end{align}
  In particular,  \eqref{eq:EL} and~\eqref{eq:EL:disc} (and the equivalent \eqref{eq:minprob} and~\eqref{eq:fosls}) admit unique solutions $\bu\in U$, $\bu_h\in U_h$, where $\bu_h$ is quasi-optimal, i.e., 
  \begin{align}\label{eq:quasiopt}
    \|\bu-\bu_h\|_U \lesssim \min_{\bv_h\in U_h} \|\bu-\bv_h\|_U.
  \end{align}
  \qed
\end{proposition}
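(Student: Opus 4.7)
The plan is to address the three claims of the proposition in turn: equivalence of the minimization and variational formulations, coercivity of $a(\cdot,\cdot)$, and finally unique solvability together with quasi-optimality.

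For the equivalence, I would expand each squared $L^2$ norm in \eqref{eq:minprob} via $\|X-Y\|^2 = \|X\|^2 - 2\ip{X}{Y} + \|Y\|^2$ and collect terms. Up to the constant $\|f\|_{L^2(Q)}^2 + \|u_0\|_{L^2(\Omega)}^2$, the objective becomes $a(\bv,\bv) - 2F(\bv)$. Since $a$ is symmetric and, once coercivity is established, strictly convex, its unique minimizer on $U$ is characterized by the Euler--Lagrange equations $a(\bu,\bv) = F(\bv)$ for all $\bv \in U$, i.e.\ \eqref{eq:EL}. The same algebra with $U$ replaced by $U_h$ yields the equivalence of \eqref{eq:fosls} and \eqref{eq:EL:disc}.

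The main work lies in proving coercivity \eqref{eq:coercivity}, and this is the step I expect to be the principal obstacle. Given $\bu = (u,\bsigma) \in U$, I would rewrite the space-time divergence as
\begin{align*}
  \divst\bu \;=\; \partial_t u + \divx\bsigma \;=\; \partial_t u - \Delta_\bx u + \divx(\gradx u + \bsigma),
\end{align*}
so that $u$ is the unique weak solution in $X$ of the heat equation with right-hand side $g := \divst\bu - \divx(\gradx u + \bsigma) \in L^2(I;H^{-1}(\Omega))$ and initial datum $u(0) \in L^2(\Omega)$. Theorem~\ref{thm:ss09} then yields
\begin{align*}
  \|\gradx u\|_{L^2(Q)} \;\le\; \|u\|_X \;\lesssim\; \|\divst\bu\|_{L^2(Q)} + \|\gradx u + \bsigma\|_{L^2(Q)} + \|u(0)\|_{L^2(\Omega)},
\end{align*}
where I use that $\divx \colon L^2(\Omega)^d \to H^{-1}(\Omega)$ is bounded. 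The triangle inequality $\|\bsigma\|_{L^2(Q)} \le \|\gradx u + \bsigma\|_{L^2(Q)} + \|\gradx u\|_{L^2(Q)}$, together with the trivial estimate for the divergence term, then gives $\|\bu\|_U^2 \lesssim a(\bu,\bu)$.

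Boundedness of $a$ and $F$ follows from Cauchy--Schwarz together with the trace estimate $\|v(0)\|_{L^2(\Omega)} \lesssim \|v\|_X \lesssim \|\bv\|_U$, which is obtained by combining \eqref{eq:X2C0} and \eqref{eq:X2U}. With boundedness and coercivity in hand, the Lax--Milgram theorem yields unique solutions in $U$ and in any closed subspace $U_h$. The quasi-optimality \eqref{eq:quasiopt} is then a textbook C\'ea argument: for any $\bv_h \in U_h$, the Galerkin orthogonality \eqref{eq:galerkinorth} gives $a(\bu - \bu_h, \bu - \bu_h) = a(\bu - \bu_h, \bu - \bv_h)$, and combining boundedness of $a$ with the coercivity just established yields $\|\bu - \bu_h\|_U \lesssim \|\bu - \bv_h\|_U$, after which one takes the infimum over $\bv_h \in U_h$.
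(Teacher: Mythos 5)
The paper itself gives no proof of this proposition---it is stated with \(\qed\) and attributed to \cite{FuehrerKarkulik21,GantnerStevenson21}---but your argument is correct and reproduces the standard approach used in those references. In particular, the central coercivity step, rewriting $\partial_t u - \Delta_\bx u = \divst\bu - \divx(\gradx u + \bsigma)$ and then applying the stability estimate of Theorem~\ref{thm:ss09} with the boundedness of $\divx\colon L^2(\Omega)^d\to H^{-1}(\Omega)$, is exactly the mechanism of those works, and the equivalence and quasi-optimality claims then follow by the routine expansion of the quadratic functional together with Lax--Milgram and C\'ea.
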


\subsection{Meshes and discretization space}\label{sec:meshes}

Let $\cK_t$ denote a partition into open intervals of the time interval $I$ and let $\cK_\bx$ denote a partition of $\Omega$ into open simplices. 
The tensor-product mesh is defined as $\cK = \cK_t\otimes \cK_\bx$, i.e., for each $K\in\cK$ there exist unique $K_t\in\cK_t$, $K_\bx\in \cK_\bx$ with $K = K_t\times K_\bx$.
We define the maximal mesh sizes in time and space as
\begin{align*}
  h_t := \max_{K_t\in\cK_t} |K_t| \quad \text{and} \quad h_\bx := \max_{K_\bx\in\cK_\bx} \diam(K), 
\end{align*}
respectively.
Here, $|\cdot|$ denotes the length of an interval and $\diam(\cdot)$ the diameter of a set.
Throughout, we suppose that the meshes $\cK_t$ and $\cK_\bx$ are quasi-uniform in the sense that there exists a uniform constant $C>0$ independent of $\cK_t$ and $\cK_\bx$ such that
\begin{align}\label{eq:quasi-uniform}
  h_t \le C|K_t| \quad \forall K_t\in\cK_t \qquad \text{and} \qquad h_\bx \le C\diam(K_\bx) \quad \forall K_\bx\in\cK_\bx.
\end{align}
Moreover, we suppose that $\cK_\bx$ is shape regular in the sense that there exists a uniform constant $C'>0$ independent of $\cK_\bx$ such that
\begin{align}\label{eq:shape-regular}
  \diam(K_\bx)^d \le C' |K_\bx| \quad \forall K_\bx \in \cK_\bx
\end{align}
Here, $|\cdot|$ denotes the measure of a set. 
Note that this is trivially satisfied with $C'=1$ if $d=1$.
Given $K_t\in\cK_t$ and $K_\bx\in\cK_\bx$, we define the corresponding patches as 
\begin{align*}
  \patch(K_t) := {\rm int}\Big(\bigcup \set{\overline{K_t'}}{K_t'\in\cK_t, \overline{K_t} \cap \overline{K_t'} \neq \emptyset}\Big)
\end{align*}
and
\begin{align*}
  \patch(K_\bx) := {\rm int} \Big(\bigcup \set{\overline{K_\bx'}}{K_\bx'\in\cK_\bx, \overline{K_\bx} \cap \overline{K_\bx'} \neq \emptyset}\Big).
\end{align*}
Here, ${\rm int}(\cdot)$ denotes the interior of a set.

Let $P_k(\cK_t)$ and $P_k(\cK_\bx)$ denote the space of piecewise polynomials of degree $\leq k\in \N_0$ on $\overline I$ and $\overline \Omega$, respectively. 
Further, let $S_{k}(\cK_t)$ and $S_{k}(\cK_\bx)$ denote the space of continuous piecewise polynomials (Lagrange finite element space), and $S_{k,0}(\cK_\bx)$ the subspace of functions in $S_{k}(\cK_\bx)$ with vanishing trace on $\partial\Omega$. 
Moreover, let $\RT_k(\cK_\bx)$ be the $H(\div_\bx;\Omega)$-conforming Raviart--Thomas finite element space of order $k$ over the spatial partition $\cK_\bx$.
If $d=1$, $\RT_k(\cK_\bx)$ just coincides with $S_{k+1}(\cK_\bx)$.
For $K_t\in\cK_t$ and $H$ a Hilbert space, we use the notation $P_k(K_t;H)$ for $H$-valued polynomials of degree $\leq k$ in time.
Similarly, we define $P_k(\cK_t;H)$ as the space of functions on $\overline I$ whose restriction to each $K_t\in\cK_t$ is in $P_k(K_t;H)$, and $S_k(\cK_t;H)$ as the subspace of functions in $P_k(\cK_t;H)$ which are continuous in time.

Throughout this article, we consider, for arbitrary but fixed polynomial degrees $k,\ell\in \N$,
\begin{align}\label{eq:U_h}
  U_h := S_{k}(\cK_t)\otimes S_{\ell,0}(\cK_\bx) \times P_{k-1}(\cK_t)\otimes \RT_\ell(\cK_\bx),
\end{align}
where $\otimes$ denotes the algebraic tensor product of two finite-dimensional spaces.
This space has been introduced and studied in~\cite{GantnerStevenson24} for FOSLS for parabolic PDEs.

\subsection{Interpolation operators}\label{sec:iop}

Let $\proj\colon L^2(Q) \to P_{k-1}(\cK_t)\otimes P_\ell(\cK_\bx) = \divst(U_h)$ be the $L^2(Q)$ orthogonal projection onto $P_{k-1}(\cK_t)\otimes P_\ell(\cK_\bx)$.
The following simple standard approximation result will be used in the proofs of the main results. 
\begin{lemma}\label{lem:L2proj}
  For all $v\in L^2( I;H^2(\Omega))\cap H^1( I; L^2(\Omega))$, it holds that
  \begin{align*}
    \|(1-\proj)v\|_{L^2(Q)} \leq \min_{w\in P_{0}(\cK_t)\otimes P_1(\cK_\bx)}\|v-w\|_{L^2(Q)} \lesssim h_\bx^2\|D_\bx^2\|_{L^2(Q)} + h_t\|\partial_t v\|_{L^2(Q)}.
  \end{align*}
  \qed
\end{lemma}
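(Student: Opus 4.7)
The plan is to prove the two inequalities separately. For the first, I would invoke the $L^2(Q)$-best approximation property of $\proj$; for the second, I would construct an explicit approximation in $P_0(\cK_t)\otimes P_1(\cK_\bx)$ by a tensor-product projection and split the error into purely spatial and purely temporal contributions.

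For the first inequality, since $k,\ell\geq 1$, the inclusion $P_0(\cK_t)\otimes P_1(\cK_\bx)\subseteq P_{k-1}(\cK_t)\otimes P_\ell(\cK_\bx)$ holds, and $\proj v$ is by definition the $L^2(Q)$-orthogonal projection onto the larger space. Best approximation in an inner product space then yields
\begin{equation*}
\|(1-\proj)v\|_{L^2(Q)} \leq \|v-w\|_{L^2(Q)} \quad\forall w\in P_{k-1}(\cK_t)\otimes P_\ell(\cK_\bx),
\end{equation*}
and in particular over the smaller subspace $P_0(\cK_t)\otimes P_1(\cK_\bx)$.

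For the second inequality, let $\Pi_t$ denote the $L^2$-orthogonal projection (in time) onto $P_0(\cK_t)$, applied pointwise in the spatial variable, and let $\Pi_\bx$ denote the $L^2$-orthogonal projection (in space) onto $P_1(\cK_\bx)$, applied pointwise in time. By tensor-product structure, $\Pi_t$ and $\Pi_\bx$ commute and their composition maps into $P_0(\cK_t)\otimes P_1(\cK_\bx)$. Choosing $w:=\Pi_t\Pi_\bx v$, I would use the telescoping identity
\begin{equation*}
v-w = (1-\Pi_\bx)v + \Pi_\bx(1-\Pi_t)v
\end{equation*}
and estimate each term. The first term is bounded by a Bramble--Hilbert argument on each shape-regular simplex $K_\bx$ (recalling that the spatial $L^2$-projection onto $P_1(K_\bx)$ is the local best approximation), yielding $\|(1-\Pi_\bx)v\|_{L^2(Q)}\lesssim h_\bx^2\|D_\bx^2 v\|_{L^2(Q)}$; here $L^2(I;H^2(\Omega))$-regularity of $v$ is used. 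For the second term, $L^2(\Omega)$-stability of $\Pi_\bx$ gives $\|\Pi_\bx(1-\Pi_t)v\|_{L^2(Q)}\leq \|(1-\Pi_t)v\|_{L^2(Q)}$, and the standard one-dimensional estimate for the $L^2$-projection onto piecewise constants on a quasi-uniform partition yields $\|(1-\Pi_t)v\|_{L^2(Q)}\lesssim h_t\|\partial_t v\|_{L^2(Q)}$, exploiting $v\in H^1(I;L^2(\Omega))$.

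The main obstacle is essentially cosmetic: one must choose the intermediate approximation so that the mixed derivative $\partial_t D_\bx^2 v$ never appears. Using the $L^2$-stability of $\Pi_\bx$ in space rather than applying $\partial_t$ to $\Pi_\bx v$ avoids this and produces the asserted bound involving only pure spatial and pure temporal derivatives.
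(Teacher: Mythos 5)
Your proof is correct and is precisely the standard argument the paper has in mind; the paper itself omits the proof, labelling the lemma a ``simple standard approximation result'' and stating it with only a \verb|\qed|. You correctly observe that the first inequality is just the best-approximation property of the $L^2(Q)$-orthogonal projection $\proj$, using the inclusion $P_0(\cK_t)\otimes P_1(\cK_\bx)\subseteq P_{k-1}(\cK_t)\otimes P_\ell(\cK_\bx)$ valid since $k,\ell\in\N$; and the telescoping decomposition $v-\Pi_t\Pi_\bx v=(1-\Pi_\bx)v+\Pi_\bx(1-\Pi_t)v$, estimating the first term by the spatial Bramble--Hilbert bound and the second via $L^2$-stability of $\Pi_\bx$ followed by the one-dimensional piecewise-constant estimate, is exactly the right way to avoid a mixed derivative term. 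One small remark: the paper's displayed right-hand side has a typo, writing $\|D_\bx^2\|_{L^2(Q)}$ where $\|D_\bx^2 v\|_{L^2(Q)}$ is meant; your argument correctly produces the latter.
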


For $H$ a Hilbert space and $k\in\N_0$, we denote by $\proj_{t,k}$ the $L^2(I;H)$ orthogonal projection onto $P_k(\cT_t;H)$.
We note that $\|(1-\proj_{t,k})v\|_{L^2(I;H)} \leq \|(1-\proj_{t,0})v\|_{L^2(I;H)}$ for all $v\in L^2(I;H)$.

\begin{lemma}\label{lem:projP0time}
  For all $s\in[0,1]$ and $v\in H^s( I; H)$, it holds that
  \begin{align*}
    \|(1-\proj_{t,0})v\|_{L^2(I;H)} \lesssim h_t^s \|v\|_{H^s( I;H)}.
  \end{align*}
  If $s=1$, it even holds that
    \begin{align*}
    \|(1-\proj_{t,0})v\|_{L^2( I;H)} \lesssim h_t \|\partial_tv\|_{L^2( I;H)}.
  \end{align*}
  For all $\btau\in L^2(Q)^d$, it holds that
  \begin{align*}
    \|(1-\proj_{t,0})\divx\btau\|_{L^2( I;H^{-1}(\Omega))} 
    \lesssim \|(1-\proj_{t,0})\btau\|_{L^2(Q)}.
  \end{align*}
\end{lemma}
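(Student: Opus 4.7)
The plan is to address the three assertions in the order $s=1$, general $s$, and the divergence bound, since each builds on the previous.

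For the $s=1$ statement I would work elementwise in time. On each $K_t\in\cK_t$, $\proj_{t,0}v|_{K_t}$ is the $H$-valued mean $\frac{1}{|K_t|}\int_{K_t}v(s)\,\di s$, so a standard Poincaré/Bramble--Hilbert argument on the interval gives
\[
\|v-\proj_{t,0}v\|_{L^2(K_t;H)}\lesssim |K_t|\,\|\partial_t v\|_{L^2(K_t;H)}.
\]
Squaring, summing over $K_t\in\cK_t$, and using quasi-uniformity $|K_t|\le h_t$ yields the second, sharper estimate. This also immediately bounds $(1-\proj_{t,0})$ from $H^1(I;H)$ into $L^2(I;H)$ with constant $\lesssim h_t$.

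For the general $s\in[0,1]$ estimate I would apply operator interpolation. The operator $1-\proj_{t,0}$ is trivially bounded as a map $L^2(I;H)\to L^2(I;H)$ with norm $\le 1$, and by the previous paragraph it is bounded as a map $H^1(I;H)\to L^2(I;H)$ with norm $\lesssim h_t$. Since $H^s(I;H)$ is by definition the interpolation space $[L^2(I;H),H^1(I;H)]_s$, the standard operator interpolation theorem (e.g.\ in the K-method) gives boundedness $H^s(I;H)\to L^2(I;H)$ with norm $\lesssim 1^{1-s}\,h_t^{\,s}=h_t^{\,s}$, which is exactly the first assertion.

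For the third statement I would exploit the fact that $\proj_{t,0}$ acts purely in time by averaging, while $\divx$ acts purely in space; hence they commute. Concretely, for $\btau\in L^2(Q)^d$ and any $\phi\in L^2(I;H_0^1(\Omega))$,
\[
\int_I\dual{\proj_{t,0}\divx\btau}{\phi}\,\di t=\int_I\dual{\divx\btau}{\proj_{t,0}\phi}\,\di t=-\int_I\ip{\btau}{\gradx\proj_{t,0}\phi}_{L^2(\Omega)}\,\di t,
\]
and $\gradx\proj_{t,0}\phi=\proj_{t,0}\gradx\phi$ by the same averaging identity, so the right-hand side equals $-\int_I\ip{\proj_{t,0}\btau}{\gradx\phi}_{L^2(\Omega)}\,\di t=\int_I\dual{\divx\proj_{t,0}\btau}{\phi}\,\di t$. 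This shows $(1-\proj_{t,0})\divx\btau=\divx(1-\proj_{t,0})\btau$. Using the pointwise-in-time bound $\|\divx\bw(t)\|_{H^{-1}(\Omega)}\le\|\bw(t)\|_{L^2(\Omega)^d}$ and integrating in $t$ yields the claimed estimate.

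No part of this argument seems to present a genuine obstacle; the only places to be careful are the distributional justification that $\proj_{t,0}$ commutes with $\divx$ (which is clean since the projection is just a local time-average) and the correct identification of the interpolation space in the second step, both of which are handled by the citation to \cite{DL92} and \cite{LionsMagenesI} already used elsewhere in the paper.
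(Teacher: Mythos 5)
Your proposal is correct and follows essentially the same route the paper sketches: an elementwise Poincar\'e/Bramble--Hilbert bound for $s=1$, operator interpolation between the trivial $L^2$ stability and the $s=1$ estimate for $s\in(0,1)$, and the commutation $\proj_{t,0}\circ\divx=\divx\circ\proj_{t,0}$ combined with boundedness of $\divx\colon L^2(Q)^d\to L^2(I;H^{-1}(\Omega))$ for the third claim. The only difference is that you spell out the duality argument justifying the commutation, which the paper states without proof.
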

\begin{proof}
  The first estimate for $s=0,1$ and the second (stronger) estimate are standard and follow as for the special case $H=\R$.
  The cases $s\in(0,1)$ are a consequence of standard interpolation arguments.

  To see the final estimate, note that $\proj_{t,0} \circ \divx = \divx\circ\proj_{t,0}$. Therefore,
  \begin{align*}
    \|(1-\proj_{t,0})\divx\btau\|_{L^2( I;H^{-1}(\Omega))} = \|\divx(1-\proj_{t,0})\btau\|_{L^2( I;H^{-1}(\Omega))}.
  \end{align*}
  Boundedness of $\divx\colon L^2(Q)^d \to L^2( I;H^{-1}(\Omega))$ finishes the proof.
\end{proof}

The following result stems from~\cite[Theorem~12 and Remark~14]{StevensonStorn23}.
Note that we consider the operator described in~\cite[Remark~14]{StevensonStorn23} to avoid necessity of parabolic scaling. 
\begin{lemma}\label{lem:iop1}
  There exists a linear projection $\iop_1\colon X\to S_{k}(\cK_t)\otimes S_{\ell,0}(\cK_\bx)$ such that for all $v\in X$, 
  \begin{align}
    \|\gradx\iop_1 v\|_{L^2(Q)} &\lesssim \|\gradx v\|_{L^2(Q)}, \label{eq:grad_iop1} \\
    \|\partial_t\iop_1 v\|_{L^2( I;H^{-1}(\Omega))} &\lesssim \|\partial_t v\|_{L^2( I;H^{-1}(\Omega))}. \label{eq:dt_iop1}
  \end{align}
  In addition, if $v\in Y$, then
  \begin{align}\label{eq:app_grad_iop1}
    \|\gradx(v-\iop_1v)\|_{L^2(Q)} &\lesssim (h_\bx + h_t^{1/2})\|v\|_Y,
  \end{align}
  If $v\in X\cap H^1( I;L^2(\Omega))$, then
  \begin{align}\label{eq:app_dt_iop1}
    \|\partial_t(v-\iop_1 v)\|_{L^2( I;H^{-1}(\Omega))} &\lesssim h_\bx \|\partial_t v\|_{L^2(Q)} + \|(1-\proj_{t,k-1})\partial_t v\|_{L^2( I;H^{-1}(\Omega))}.
  \end{align}
\end{lemma}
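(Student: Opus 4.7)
The plan is to construct $\iop_1$ as a tensor product $\iop_1 = \iop_t\otimes\iop_\bx$, where $\iop_\bx\colon L^2(\Omega)\to S_{\ell,0}(\cK_\bx)$ is a Scott--Zhang-type projector and $\iop_t\colon L^2(I)\to S_k(\cK_t)$ is a Scott--Zhang-type projector in time. The lemma is explicitly stated as stemming from \cite[Theorem~12 and Remark~14]{StevensonStorn23}, so the heart of the work is the careful design of these two building blocks; once they are available, the four estimates follow from standard decompositions and tensor-product arguments.

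For the stability estimates, \eqref{eq:grad_iop1} uses $\gradx\iop_1 v = \iop_t(\gradx\iop_\bx v)$, together with $L^2(I)$-stability of $\iop_t$ (acting pointwise in space) and $H_0^1(\Omega)$-stability of $\iop_\bx$ (acting pointwise in time). For \eqref{eq:dt_iop1}, the key ingredient is a commuting-diagram property $\partial_t\iop_t = \Pi_t\partial_t$ for some $L^2(I)$-stable projector $\Pi_t$ onto $P_{k-1}(\cK_t)$, giving $\partial_t\iop_1 v = \iop_\bx\,\Pi_t\partial_t v$; the bound then follows from $H^{-1}(\Omega)$-stability of $\iop_\bx$ and $L^2(I;H^{-1}(\Omega))$-stability of $\Pi_t$.

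For the approximation estimates, \eqref{eq:app_grad_iop1} is obtained by splitting $v - \iop_1 v = (I - \iop_t)v + \iop_t(I - \iop_\bx)v$. The spatial term is controlled by standard Scott--Zhang estimates, yielding $h_\bx\|v\|_{L^2(I;H^2(\Omega))}\lesssim h_\bx\|v\|_Y$; the temporal term uses Lemma~\ref{lem:embedding} to embed $Y\hookrightarrow H^{1/2}(I;H^1(\Omega))$ together with the fractional $L^2(I)$-approximation property of $I - \iop_t$ (as in the $s=1/2$ case of Lemma~\ref{lem:projP0time}) to extract the factor $h_t^{1/2}$. For \eqref{eq:app_dt_iop1}, identifying $\Pi_t$ with $\proj_{t,k-1}$ and using $\partial_t\iop_1 v = \iop_\bx\proj_{t,k-1}\partial_t v$, one decomposes
\begin{align*}
\partial_t(v - \iop_1 v) = (I - \iop_\bx)\partial_t v + \iop_\bx(I - \proj_{t,k-1})\partial_t v,
\end{align*}
and bounds the first term by spatial approximation of $\iop_\bx$ in $H^{-1}(\Omega)$ (giving $h_\bx\|\partial_t v\|_{L^2(Q)}$) and the second by $H^{-1}$-stability of $\iop_\bx$.

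The main obstacle, and the nonstandard ingredient compared to classical Scott--Zhang theory, is the simultaneous $H_0^1$- and $H^{-1}$-stability of $\iop_\bx$: $H^{-1}$-stability is equivalent (by duality) to $H_0^1$-stability of the $L^2$-adjoint, which forces a bespoke choice of dual basis in the Scott--Zhang construction. The exact commuting-diagram property relating $\partial_t\iop_t$, $\partial_t$, and $\proj_{t,k-1}$ is the other delicate piece. Both are precisely the content of Theorem~12 and Remark~14 of \cite{StevensonStorn23}, which is why the present lemma is simply cited from there.
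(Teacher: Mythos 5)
Your proposal outlines a plausible construction but takes a genuinely different route from the paper. The paper does not construct $\iop_1$ at all; it cites from \cite{StevensonStorn23} a pair of \emph{local best-approximation bounds} of the form
\begin{align*}
\|\gradx(v-\iop_1 v)\|_{L^2(Q)}^2 &\lesssim \sum_{K\in\cK}\Big(\min_{v_\bx}\|\gradx(v-v_\bx)\|_{L^2(K_t;L^2(\patch(K_\bx)))}^2 + \min_{\btau_t}\|\gradx v-\btau_t\|_{L^2(\patch(K_t);L^2(\patch(K_\bx)))}^2\Big),
\end{align*}
and similarly for $\partial_t(v-\iop_1 v)$ in $L^2(I;H^{-1}(\Omega))$, and then obtains \eqref{eq:grad_iop1}--\eqref{eq:app_dt_iop1} by choosing competitors in these minimizations. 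In particular, \eqref{eq:app_dt_iop1} follows by plugging the competitor $v_t := \int\proj_{t,k-1}\partial_t v\,\di s$ into the temporal minimum — it is merely \emph{one admissible choice}, not a statement that the interpolant itself factors through $\proj_{t,k-1}$.

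Your approach instead assumes a tensor-product factorization $\iop_1 = \iop_t\otimes\iop_\bx$ with the \emph{exact} commuting identity $\partial_t\iop_t = \proj_{t,k-1}\partial_t$, from which the decomposition $\partial_t(v-\iop_1 v) = (1-\iop_\bx)\partial_t v + \iop_\bx(1-\proj_{t,k-1})\partial_t v$ follows. This gives the right bound \emph{if} the operator actually satisfies that identity with the $L^2(I)$-orthogonal projection $\proj_{t,k-1}$ as commuting partner. The operator of \cite[Rem.~14]{StevensonStorn23}, however, is a weighted Clément/Scott--Zhang-type construction and its derivative is not, in general, $\proj_{t,k-1}\partial_t$ (a commuting partner exists, but it need not be $\proj_{t,k-1}$, and replacing it by $\proj_{t,k-1}$ in \eqref{eq:app_dt_iop1} would require an extra step). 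The paper's argument via local best-approximation sidesteps this entirely, which is the main methodological gain. Your algebraic splittings are otherwise sound, and you correctly flag the two genuinely delicate ingredients — the simultaneous $H_0^1$/$H^{-1}$ stability of $\iop_\bx$ (and its $H^{-1}$-approximation from $L^2$ data, which you need for the first term in your decomposition of \eqref{eq:app_dt_iop1} but only mention stability) and the temporal commuting structure — as the content of the citation. Be aware, though, that what \cite{StevensonStorn23} actually delivers is the local quasi-best-approximation estimates the paper quotes, not the sharper commuting identity your argument relies on; to be fully faithful to the reference you should argue via competitors, as the paper does.
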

\begin{proof}
  The interpolation operator $\iop_1$ of~\cite[Remark~14]{StevensonStorn23} satisfies that
  \begin{align*}
    \|\gradx(v-\iop_1v)\|_{L^2(Q)}^2 &\lesssim \sum_{K\in\cK} \Big( \min_{v_\bx\in L^2(I;S_{\ell,0}(\cK_\bx))} \|\gradx(v-v_\bx)\|_{L^2(K_t;L^2(\patch(K_\bx)))}^2 \\
    &\qquad + \min_{\btau_t\in S_k(\cK_t;L^2(\Omega)^{d})} \|\gradx v - \btau_t\|_{L^2(\patch(K_t);L^2(\patch(K_\bx)))}^2\Big),  \\
    \|\partial_t(v-\iop_1 v)\|_{L^2( I;H^{-1}(\Omega))}^2 &\lesssim
    \sum_{K\in\cK}  \min_{v_\bx\in L^2(I;S_{\ell,0}(\cK_\bx))} \|\partial_tv-v_\bx\|_{L^2(K_t;H^{-1}(\patch(K_\bx)))}^2 
      \\
      &\qquad + \sum_{K_t\in\cK_t} \min_{v_t\in S_k(\cK_t;H^{-1}(\Omega))} \|\partial_t(v-v_t)\|_{L^2(\patch(K_t);H^{-1}(\Omega))}^2. 
  \end{align*}
  The boundedness estimates follow at once.

  Suppose that $v\in Y$. Approximation properties and noting $\gradx v\in H^{1/2}( I;L^2(\Omega)^d)$ 
  by the embedding $Y\hookrightarrow H^{1/2}(I;H^1(\Omega))$ yield that
  \begin{align*}
    \sum_{K\in\cK} \min_{v_\bx\in L^2(I;S_{\ell,0}(\cK_\bx))} \|\gradx(v-v_\bx)\|_{L^2(K_t;L^2(\patch(K_\bx)))}^2 &\lesssim h_\bx^2 \|D_\bx^2 v\|_{L^2(Q)}^2, \\
    \sum_{K\in\cK}\min_{\btau_t\in S_k(\cK_t;L^2(\Omega)^{d})} \|\gradx v - \btau_t\|_{L^2(\patch(K_t);L^2(\patch(K_\bx)))}^2
    &\lesssim h_t \|\gradx v\|_{H^{1/2}( I;L^2(\Omega)^d)}^2.
  \end{align*}
  Note that $\|D_\bx^2 v\|_{L^2(Q)} + \|\gradx v\|_{H^{1/2}( I;L^2(\Omega)^d)} \lesssim \|v\|_Y$.
  The very last estimate for $v\in X\cap H^1( I;L^2(\Omega))$ can be seen as follows: First, a scaling argument shows that 
  \begin{align*}
    \min_{v_\bx\in L^2(I;S_{\ell,0}(\cK_\bx))} \|\partial_tv-v_\bx\|_{L^2(K_t;H^{-1}(\patch(K_\bx)))} 
    \lesssim h_\bx \|\partial_t v\|_{L^2(K_t;L^2(\patch(K_\bx)))}.
  \end{align*}
  Second, by choosing $v_t:= \int \proj_{t,k-1}\partial_t v \,\di s\in S_{k}(\cK_t;H^{-1}(\Omega))$ such that $\partial_t v_t = \proj_{t,k-1}\partial_t v$ leads to
  \begin{align*}
    \sum_{K_t\in\cK_t} \min_{v_t\in S_k(\cK_t;H^{-1}(\Omega))} \|\partial_t(v-v_t)\|_{L^2(\patch(K_t);H^{-1}(\Omega))}^2 \lesssim \|(1-\proj_{t,k-1})\partial_t v\|_{L^2( I;H^{-1}(\Omega))},
  \end{align*}
  which finishes the proof.
\end{proof}

The next result is taken from~\cite[Theorem~17]{StevensonStorn23}. 
\begin{lemma}\label{lem:iop2}
  There exists a linear projection $\iop_2\colon L^2(Q)^d \to P_{k-1}(\cK_t)\otimes \RT_\ell(\cK_\bx)$ with 
  \begin{align} \label{eq:comm_iop2}
    \divx \circ \iop_2 = \proj\circ \divx \quad \text{on }L^2(I;\Hdivset\Omega).
  \end{align}
  For all $\btau \in L^2(Q)^d$, it holds that
  \begin{align}\label{eq:l2_iop2}
    \|\iop_2\btau\|_{L^2(Q)} \lesssim \|\btau\|_{L^2(Q)}
  \end{align}
  In addition, if $\btau\in L^2( I;H^1(\Omega)^d) \cap H^{1/2}( I; L^2(Q)^d)$, then
  \begin{align}\label{eq:app_iop2}
    \|\btau-\iop_2\btau\|_{L^2(Q)} \lesssim h_\bx \|\btau\|_{L^2( I; H^1(\Omega)^d)} + h_t^{1/2}\|\btau\|_{H^{1/2}( I; L^2(\Omega)^d)}.
  \end{align}
\end{lemma}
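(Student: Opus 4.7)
The plan is to build $\iop_2$ as a tensor-product composition of a temporal and a spatial projection. In time, I would take $\proj_{t,k-1}$ from Lemma~\ref{lem:projP0time}. In space, I would invoke a regularized commuting projection $\pi_\bx \colon L^2(\Omega)^d \to \RT_\ell(\cK_\bx)$ (of Schöberl/Falk--Winther/Ern--Guermond type) enjoying three properties: (i) $L^2(\Omega)$-stability $\|\pi_\bx \btau\|_{L^2(\Omega)} \lesssim \|\btau\|_{L^2(\Omega)}$, (ii) first-order local approximation $\|\btau - \pi_\bx \btau\|_{L^2(\Omega)} \lesssim h_\bx \|\btau\|_{H^1(\Omega)^d}$, and (iii) the commuting identity $\divx \circ \pi_\bx = \pi_\bx^0 \circ \divx$ on $\Hdivset{\Omega}$, where $\pi_\bx^0 \colon L^2(\Omega) \to P_\ell(\cK_\bx)$ is the spatial $L^2$-projection. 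Extending $\pi_\bx$ pointwise in time to $L^2(Q)^d$, I would set $\iop_2 := \pi_\bx \circ \proj_{t,k-1}$, whose range is exactly $P_{k-1}(\cK_t)\otimes \RT_\ell(\cK_\bx)$ and which is a projection because both factors are.

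For \eqref{eq:comm_iop2} I will use that $\proj_{t,k-1}$ (acting only in time) commutes with $\divx$ and with $\pi_\bx^0$, and that on the tensor-product polynomial space the full $L^2(Q)$-projection factors as $\proj = \proj_{t,k-1} \otimes \pi_\bx^0$. Then for $\btau \in L^2(I;\Hdivset{\Omega})$ one gets $\divx \iop_2 \btau = \pi_\bx^0 \divx \proj_{t,k-1}\btau = \pi_\bx^0 \proj_{t,k-1} \divx \btau = \proj \divx\btau$. Stability \eqref{eq:l2_iop2} follows immediately from the $L^2$-stability of both factors. For \eqref{eq:app_iop2} I would split $\btau - \iop_2\btau = (\btau - \pi_\bx\btau) + \pi_\bx(\btau - \proj_{t,k-1}\btau)$; integrating (ii) in time yields $\|\btau-\pi_\bx\btau\|_{L^2(Q)} \lesssim h_\bx\|\btau\|_{L^2(I;H^1(\Omega)^d)}$, while stability (i) combined with Lemma~\ref{lem:projP0time} applied with $H=L^2(\Omega)^d$, $s=1/2$ (and the fact that $\proj_{t,k-1}$ is no worse than $\proj_{t,0}$ in $L^2(I;H)$) gives $\|\pi_\bx(\btau-\proj_{t,k-1}\btau)\|_{L^2(Q)} \lesssim h_t^{1/2}\|\btau\|_{H^{1/2}(I;L^2(\Omega)^d)}$.

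The main obstacle is the spatial ingredient $\pi_\bx$ itself: the classical Raviart--Thomas canonical interpolant is not well-defined on all of $L^2(\Omega)^d$, so one genuinely needs a smoothed/regularized commuting projection that simultaneously satisfies $L^2$-stability, the local approximation bound, and the divergence-commuting diagram. This is exactly what \cite[Theorem~17]{StevensonStorn23} provides, and once such a $\pi_\bx$ is in hand the tensor-product construction above is a routine verification.
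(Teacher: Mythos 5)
You take a genuinely different route from the paper. The paper does not construct $\iop_2$ at all: it cites \cite[Theorem~17]{StevensonStorn23} for a ready-made \emph{space-time} operator $\iop_2$ that comes equipped with the commuting identity~\eqref{eq:comm_iop2} and a near-best local approximation estimate of the form
\begin{align*}
    \|\btau-\iop_2\btau\|_{L^2(Q)}^2 \eqsim \sum_{K\in\cK}
    \Big(
      \min_{\btau_\bx\in L^2(I;\RT_\ell(\cK_\bx))} \|\btau-\btau_\bx\|_{L^2(K_t;L^2(\patch(K_\bx)))}^2
      + \min_{\btau_t \in P_{k-1}(I;L^2(K_\bx)^{d})} \|\btau-\btau_t\|_{L^2(K_t;L^2(K_\bx))}^2
    \Big);
\end{align*}
from this, stability~\eqref{eq:l2_iop2} follows at once by inserting $\btau_\bx=\btau_t=0$ and using finite patch overlap, and \eqref{eq:app_iop2} follows by the same patch-wise scaling and interpolation arguments as in the proof of Lemma~\ref{lem:iop1}. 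Your tensor-product construction $\iop_2 = \pi_\bx\circ\proj_{t,k-1}$ is more elementary in spirit, and the algebraic verification you carry out --- projection property, the factorization $\proj=\proj_{t,k-1}\otimes\pi_\bx^0$ of the $L^2(Q)$-orthogonal projection, commutativity of $\divx$ with $\proj_{t,k-1}$, the splitting $\btau-\iop_2\btau = (\btau-\pi_\bx\btau)+\pi_\bx(\btau-\proj_{t,k-1}\btau)$, and the use of Lemma~\ref{lem:projP0time} with $s=1/2$ --- is correct.

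The gap is in the justification of the spatial ingredient $\pi_\bx$, and it is a real one. First, \cite[Theorem~17]{StevensonStorn23} supplies the full space-time operator $\iop_2$ (this is exactly what the paper quotes), not the purely spatial $\pi_\bx$, so it is not the reference you want for your last sentence. Second, and more substantively, your property~(iii) requires the \emph{exact} commutation $\divx\circ\pi_\bx=\pi_\bx^0\circ\divx$ with $\pi_\bx^0$ the $L^2(\Omega)$-orthogonal projection onto $P_\ell(\cK_\bx)$. The Sch\"oberl, Falk--Winther, and Ern--Guermond smoothed/regularized projections are $L^2$-stable, locally approximating and commuting, but the companion operator at the bottom of the complex in those constructions is itself a smoothed averaging operator, \emph{not} the $L^2$-orthogonal projection, and the two generally differ. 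Without the exact companion, your identification $\pi_\bx^0\circ\proj_{t,k-1}=\proj$ breaks and \eqref{eq:comm_iop2} does not follow. A spatial $L^2(\Omega)$-stable Fortin-type operator with $\divx\pi_\bx=\pi_\bx^0\divx$ for the orthogonal $\pi_\bx^0$ does exist in the literature (for instance the stable local commuting projector of Ern, Gudi, Smears and Vohral\'{\i}k in $H(\div)$), and with that reference your tensor-product proof closes; but this exact-companion property is precisely the delicate point, which is why the paper sidesteps the construction altogether by quoting the space-time operator.
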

\begin{proof}
 The interpolation operator $\iop_2$ of~\cite[Theorem~17]{StevensonStorn23} satisfies the commuting diagram property as well as
   \begin{align*}
    \|\btau-\iop_2\btau\|_{L^2(Q)}^2 &\eqsim \sum_{K\in\cK}
    \Big( 
      \min_{\btau_\bx\in L^2(I;\RT_\ell(\cK_\bx))} \|\btau-\btau_\bx\|_{L^2(K_t;L^2(\patch(K_\bx)))}^2
      \\ 
      &\qquad + \min_{\btau_t \in P_{k-1}(I;L^2(K_x)^{d})} \|\btau-\btau_t\|_{L^2(K_t;L^2(K_\bx))}^2
    \Big)
  \end{align*}
  The boundedness estimate follows at once.
  The approximation property follows as in the proof of Lemma~\ref{lem:iop1}.
\end{proof}

In order to define a commuting quasi-interpolator on $U_h$, we follow~\cite{StevensonStorn23} and define for $\bv = (v,\btau)\in U$,  
\begin{align*}
  \iop\bv := (\iop_1 v,\iop_3\bv) \quad\text{with}\quad
  \iop_3\bv := \iop_2\big(\btau-\gradx(-\Delta_\bx)^{-1}(\partial_t(v-\iop_1 v))\big).
\end{align*}
We recall some properties from~\cite[Theorem~18]{StevensonStorn23}. The last statement is a combination with the aforegoing results from Lemma~\ref{lem:iop1} and~\ref{lem:iop2}. 
\begin{lemma}\label{lem:iopU}
  The operator $\iop\colon U\to U_h$ is a linear projection with 
  \begin{align}\label{eq:comm_iop}
    \divst\circ \iop = \proj\circ \divst.
  \end{align}
  For all $\bv = (v,\btau)\in U$, it holds that
  \begin{align}\label{eq:app_iop3}
    \|\btau-\iop_3\bv\|_{L^2(Q)} \lesssim \|\btau-\iop_2\btau\|_{L^2(Q)} + \|\partial_t(v-\iop_1 v)\|_{L^2( I;H^{-1}(\Omega))}.
  \end{align}
  If $v\in X\cap H^1( I; L^2(\Omega))$ and $\btau\in L^2( I; H^1(\Omega)^d)\cap H^{1/2}( I;L^2(\Omega)^d)$, then
  \begin{align}\label{eq:app2_iop3}
  \begin{split}
    \|\btau-\iop_3\bv\|_{L^2(Q)} &\lesssim h_\bx \|\grad_\bx\btau\|_{L^2(Q)} + h_t^{1/2}\|\btau\|_{H^{1/2}( I;L^2(\Omega)^d)} + h_\bx \|\partial_t v\|_{L^2(Q)}
    \\
    &\qquad + \|(1-\proj_{t,k-1})\partial_t v\|_{L^2( I;H^{-1}(\Omega))}.
  \end{split}
  \end{align}
  \qed
\end{lemma}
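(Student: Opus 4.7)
The plan is to establish the structural properties of $\iop$ (being a projection and satisfying the commuting diagram) first, and then to derive the approximation estimates \eqref{eq:app_iop3} and \eqref{eq:app2_iop3} from the definition of $\iop_3$ by combining a triangle inequality with the results already collected for $\iop_1$ and $\iop_2$ in Lemmas~\ref{lem:iop1} and~\ref{lem:iop2}.

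For the projection property, setting $(\tilde v,\tilde\btau) := \iop\bv$ and using that $\iop_1$ is a projection, the correction $\partial_t(\tilde v - \iop_1\tilde v)$ in the defining formula of $\iop_3(\tilde v,\tilde\btau)$ vanishes; hence $\iop_3(\tilde v,\tilde\btau) = \iop_2\iop_3\bv = \iop_3\bv$, because $\iop_3\bv$ lies in the range of the projection $\iop_2$. For the commuting diagram, the key observation is that although $\gradx(-\Delta_\bx)^{-1}\partial_t(v-\iop_1 v)$ itself is only in $L^2(Q)^d$ with distributional spatial divergence $-\partial_t(v-\iop_1 v)\in L^2(I;H^{-1}(\Omega))$, the combination with $\btau$ has divergence
\[
  \divx\btau + \partial_t(v-\iop_1 v) \;=\; \divst\bv - \partial_t\iop_1 v \;\in\; L^2(Q).
\]
Thus the argument of $\iop_2$ lies in $L^2(I;\Hdivset{\Omega})$ and the commuting identity \eqref{eq:comm_iop2} gives $\divx\iop_3\bv = \proj(\divst\bv - \partial_t\iop_1 v) = \proj\divst\bv - \partial_t\iop_1 v$, where the last step uses that $\partial_t\iop_1 v \in P_{k-1}(\cK_t)\otimes S_{\ell,0}(\cK_\bx) \subseteq \ran(\proj)$. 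Adding $\partial_t\iop_1 v$ on both sides yields $\divst\iop\bv = \proj\divst\bv$.

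For the approximation estimates, decompose
\[
  \btau - \iop_3\bv \;=\; (\btau - \iop_2\btau) + \iop_2\gradx(-\Delta_\bx)^{-1}\partial_t(v-\iop_1 v),
\]
apply the triangle inequality, and bound the second term by the $L^2$-boundedness \eqref{eq:l2_iop2} of $\iop_2$ together with the isometric identity $\|\gradx(-\Delta_\bx)^{-1}w\|_{L^2(\Omega)} = \|w\|_{H^{-1}(\Omega)}$; this yields \eqref{eq:app_iop3}. The estimate \eqref{eq:app2_iop3} then follows by inserting \eqref{eq:app_iop2} into $\|\btau-\iop_2\btau\|_{L^2(Q)}$ and \eqref{eq:app_dt_iop1} into $\|\partial_t(v-\iop_1 v)\|_{L^2(I;H^{-1}(\Omega))}$, then using $\|\btau\|_{L^2(I;H^1(\Omega)^d)} \leq \|\btau\|_{L^2(Q)} + \|\gradx\btau\|_{L^2(Q)}$ together with the embedding $H^{1/2}(I;L^2(\Omega)^d)\hookrightarrow L^2(Q)^d$ to absorb the $h_\bx\|\btau\|_{L^2(Q)}$ contribution into the $h_t^{1/2}\|\btau\|_{H^{1/2}(I;L^2(\Omega)^d)}$ term (up to an $(h_\bx+h_t^{1/2})$ factor).

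The step demanding genuine care is the commuting diagram: one has to notice that even though the two summands forming the argument of $\iop_2$ have divergences only in $L^2(I;H^{-1}(\Omega))$, their sum collapses to an $L^2(Q)$ function, which is exactly what permits invoking the commuting identity \eqref{eq:comm_iop2}. Everything else is bookkeeping on top of Lemmas~\ref{lem:iop1} and~\ref{lem:iop2}.
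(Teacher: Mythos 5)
The paper does not actually prove this lemma: the projection and commuting-diagram properties and \eqref{eq:app_iop3} are attributed directly to \cite[Theorem~18]{StevensonStorn23}, and \eqref{eq:app2_iop3} is stated to follow by ``combination'' with Lemmas~\ref{lem:iop1} and~\ref{lem:iop2}. Your proposal is therefore a reconstruction of the omitted argument, and it is largely correct. The projection argument ($\iop_1 v$ is a fixed point of $\iop_1$, so the correction term vanishes and $\iop_3(\iop\bv)=\iop_2\iop_3\bv=\iop_3\bv$) is right. The key observation for \eqref{eq:comm_iop} — that $\divx\btau + \partial_t(v-\iop_1 v) = \divst\bv - \partial_t\iop_1 v \in L^2(Q)$, so the argument of $\iop_2$ does lie in $L^2(I;\Hdivset{\Omega})$ even though neither summand does individually — is exactly the nontrivial point, and your computation including $\proj \partial_t \iop_1 v = \partial_t\iop_1 v$ is correct. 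The decomposition and Riesz-isometry bound for \eqref{eq:app_iop3} are also fine.

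There is one genuine gap, in the passage from \eqref{eq:app_iop3} to \eqref{eq:app2_iop3}. Plugging in \eqref{eq:app_iop2} produces $h_\bx\|\btau\|_{L^2(I;H^1(\Omega)^d)}$, i.e.\ an extra $h_\bx\|\btau\|_{L^2(Q)}$ beyond the $h_\bx\|\gradx\btau\|_{L^2(Q)}$ that appears in \eqref{eq:app2_iop3}. Your proposed absorption — bounding $h_\bx\|\btau\|_{L^2(Q)}$ by the $h_t^{1/2}\|\btau\|_{H^{1/2}(I;L^2(\Omega)^d)}$ term ``up to an $(h_\bx+h_t^{1/2})$ factor'' — would leave behind a term $h_\bx\|\btau\|_{H^{1/2}(I;L^2(\Omega)^d)}$, which is not present in \eqref{eq:app2_iop3} and is not controlled by $h_t^{1/2}\|\btau\|_{H^{1/2}(I;L^2(\Omega)^d)}$ without a relation between $h_\bx$ and $h_t$; so this step as written does not close. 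The clean fix is to note that the localized estimate behind \eqref{eq:app_iop2} involves a patchwise minimization over $\RT_\ell(\cK_\bx)$, which contains local constants, so a Bramble--Hilbert argument gives the spatial contribution in terms of the $H^1$-\emph{seminorm} $\|\gradx\btau\|_{L^2(Q)}$ alone; i.e.\ \eqref{eq:app_iop2} should be read with the seminorm on the right, whence \eqref{eq:app2_iop3} follows immediately from your triangle-inequality bound. Aside from this bookkeeping issue — which is partly an imprecision in the statement of \eqref{eq:app_iop2} itself — your argument is sound.
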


\section{Improved $L^2(Q)$ error estimates}\label{sec:l2_estimates}

In our first main result we show an improved a priori error estimate for the $L^2(Q)$ error in the primal variable of the FOSLS approximation.
\begin{theorem}\label{thm:L2est}
  Suppose that $\Omega$ is convex.
  Let $\bu\in U$ and $\bu_h\in U_h$ denote the solution of~\eqref{eq:minprob} and~\eqref{eq:fosls}, respectively.
 Then, 
  \begin{align*}
    \|u-u_h\|_{L^2(Q)} &\lesssim (h_\bx + h_t^{1/2})  
    (\|\gradx(u-u_h)\|_{L^2(Q)} + \|\bsigma-\bsigma_h\|_{L^2(Q)} + \|u(0)-u_h(0)\|_{L^2(\Omega)})
    \\
  &\qquad\quad+ (h_\bx^2+h_t)\|(1-\proj)f\|_{L^2(Q)}
  \\
  &\lesssim (h_\bx+h_t^{1/2}) \|\bu-\bu_h\|_U
  \end{align*}
  and, in particular, if $h_t\eqsim h_\bx^2$, then
  \begin{align*}
    \|u-u_h\|_{L^2(Q)} &\lesssim h_\bx (\|\gradx(u-u_h)\|_{L^2(Q)} + \|\bsigma-\bsigma_h\|_{L^2(Q)} + \|u(0)-u_h(0)\|_{L^2(\Omega)}) 
    \\
    &\qquad\quad+ h_\bx^2 \|(1-\proj)f\|_{L^2(Q)} \\
    &\lesssim h_\bx \|\bu-\bu_h\|_U.
  \end{align*}
\end{theorem}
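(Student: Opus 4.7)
The plan is to run an Aubin--Nitsche-type duality argument, rigged so that the resulting dual element lies in $U$ and can be tested directly against the FOSLS bilinear form. Given an arbitrary $\phi\in L^2(Q)$ with $\|\phi\|_{L^2(Q)}=1$, I would estimate $(u-u_h,\phi)_{L^2(Q)}$ and take the supremum over such $\phi$ to recover $\|u-u_h\|_{L^2(Q)}$.

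The core step is to construct a dual $\bz^*\in U$ such that $(u-u_h,\phi)_{L^2(Q)}=a(\bu-\bu_h,\bz^*)$. First, let $w\in Y$ solve the backward heat equation $-\partial_t w-\Delta_\bx w=\phi$, $w(T)=0$, $w|_{I\times\partial\Omega}=0$; Remark~\ref{rem:parabolicreg} (using $\Omega$ convex) yields $\|w\|_Y\lesssim\|\phi\|_{L^2(Q)}$. A formal integration by parts, combined with $\partial_t(u-u_h)=\divst(\bu-\bu_h)-\divx(\bsigma-\bsigma_h)$, produces
\begin{align*}
(u-u_h,\phi)_{L^2(Q)}&=(\divst(\bu-\bu_h),w)_{L^2(Q)}+(\gradx(u-u_h)+\bsigma-\bsigma_h,\gradx w)_{L^2(Q)}
\\
&\quad+((u-u_h)(0),w(0))_{L^2(\Omega)}.
\end{align*}
To recast this as $a(\bu-\bu_h,\bz^*)$, I would then let $\zeta\in Y$ solve the forward heat equation $\partial_t\zeta-\Delta_\bx\zeta=\partial_t w+w+\phi$, $\zeta(0)=w(0)$, $\zeta|_{I\times\partial\Omega}=0$ (well posed in $Y$ by Proposition~\ref{prop:parabolicreg}, with $\|\zeta\|_Y\lesssim\|\phi\|_{L^2(Q)}$), and set $\bz^*:=(\zeta,\gradx w-\gradx\zeta)\in U$. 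A short calculation combining both PDEs verifies $\divst\bz^*=w$, $\gradx\zeta+(\gradx w-\gradx\zeta)=\gradx w$ and $\zeta(0)=w(0)$, which turns the displayed right-hand side into $a(\bu-\bu_h,\bz^*)$.

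Next, Galerkin orthogonality~\eqref{eq:galerkinorth} rewrites $(u-u_h,\phi)_{L^2(Q)}=a(\bu-\bu_h,\bz^*-\iop\bz^*)$, and I would bound the three summands of this expression separately. For the $\divst$-contribution, the commuting diagram~\eqref{eq:comm_iop} gives $\divst(\bz^*-\iop\bz^*)=(1-\proj)w$; since $\divst\bu=f$ and $\divst\bu_h\in\ran(\proj)$, one has $(\divst(\bu-\bu_h),(1-\proj)w)_{L^2(Q)}=((1-\proj)f,(1-\proj)w)_{L^2(Q)}$, so Lemma~\ref{lem:L2proj} applied to $w\in Y$ yields the $(h_\bx^2+h_t)\|(1-\proj)f\|_{L^2(Q)}\|\phi\|_{L^2(Q)}$ bound. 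For the gradient-plus-flux contribution, I would split
\begin{align*}
\gradx w-(\gradx\iop_1\zeta+\iop_3\bz^*)=\gradx(\zeta-\iop_1\zeta)+(\bchi-\iop_3\bz^*)\qquad\text{with }\bchi:=\gradx w-\gradx\zeta,
\end{align*}
and use~\eqref{eq:app_grad_iop1},~\eqref{eq:app_iop2},~\eqref{eq:app_iop3} together with $Y\hookrightarrow H^{1/2}(I;H^1(\Omega))$ from Lemma~\ref{lem:embedding} to produce an $(h_\bx+h_t^{1/2})\|\phi\|_{L^2(Q)}$ bound. For the initial-value contribution, the identity $\zeta(0)=w(0)$ reduces the error to $(\zeta-\iop_1\zeta)(0)$, and I would control it by means of the embedding~\eqref{eq:X2C0} together with the $X$-norm bound on $\zeta-\iop_1\zeta$ that follows from Lemma~\ref{lem:iop1}.

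The main technical obstacle I foresee is proving the rate $h_\bx+h_t^{1/2}$ for $\|\partial_t(\zeta-\iop_1\zeta)\|_{L^2(I;H^{-1}(\Omega))}$, which via~\eqref{eq:app_dt_iop1} reduces to bounding $\|(1-\proj_{t,k-1})\partial_t\zeta\|_{L^2(I;H^{-1}(\Omega))}$. For this I would exploit the identity $\partial_t\zeta=\Delta_\bx(\zeta-w)+w$ obtained by combining both heat equations, invoke the last estimate of Lemma~\ref{lem:projP0time} to reduce to $\|(1-\proj_{t,0})\gradx(\zeta-w)\|_{L^2(Q)}$, and close using Lemma~\ref{lem:embedding} (the trailing $w$-term being handled by the $s=1$ case of Lemma~\ref{lem:projP0time}). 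Summing the three contributions and taking the supremum over $\phi$ yields the first assertion of the theorem; the second inequality then follows from $\|(1-\proj)f\|_{L^2(Q)}\le\|\divst(\bu-\bu_h)\|_{L^2(Q)}\le\|\bu-\bu_h\|_U$ together with $h_\bx^2+h_t\lesssim h_\bx+h_t^{1/2}$ for bounded mesh sizes.
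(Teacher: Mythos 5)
Your proposal is correct and follows essentially the same route as the paper's proof. The paper takes the test function to be $u-u_h$ directly rather than a generic unit-norm $\phi\in L^2(Q)$, but this is only a cosmetic difference; your double-dual construction (your $w$ and $\zeta$ are the paper's $v$ and $w$, your $\bz^*$ is the paper's $\bw$), the use of $\divst\bz^*=w$ with the commuting interpolant to isolate the $(1-\proj)f$ contribution, and the subsequent treatment of $\partial_t\zeta=w-\divx\bchi$ via Lemma~\ref{lem:projP0time} and Lemma~\ref{lem:embedding} all match Steps~1--7 of the paper verbatim.
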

\begin{proof}
  We split the proof into seven steps. 

  \noindent
  \textbf{Step 1 }(\emph{Dual problem})
  Let $v\in X$ denote the weak solution of the backward heat equation
  \begin{align*}
    -\partial_{t} v -\Delta_\bx v &= u-u_h, \\
    v(T) &= 0, \\
    v|_{I\times\partial\Omega} &= 0.
  \end{align*}
  Since $u-u_h\in L^2(Q)$, we have by parabolic regularity (Proposition~\ref{prop:parabolicreg} and Remark~\ref{rem:parabolicreg}) that $v\in Y$ with
  \begin{align*}
    \|v\|_Y \lesssim \|u-u_h\|_{L^2(Q)}.
  \end{align*}

  \noindent
  \textbf{Step 2 }(\emph{Primal-dual problem})
  Let $w\in X$ denote the weak solution of the heat equation
  \begin{align*}
    \partial_t w -\Delta_\bx w &= v-\Delta_\bx v, \\
    w(0) &= v(0), \\
    w|_{I\times\partial\Omega} &=0,
  \end{align*}
  and set $\bchi:= \gradx v - \gradx w$.
  Then, $\bw = (w,\bchi)\in U$ with $\divst\bw = v$.
  Recall that $v\in Y$, thus, $v(0)\in H_0^1(\Omega)$ and $\Delta_\bx v\in L^2(Q)$. 
  Parabolic regularity (Proposition~\ref{prop:parabolicreg}) yields $w\in Y$ with
  \begin{align*}
    \|w\|_Y \lesssim \|v-\Delta_\bx v \|_{L^2(Q)} + \|\gradx v(0)\|_{L^2(\Omega)} 
    \lesssim \|v\|_Y \lesssim \|u-u_h\|_{L^2(Q)},
  \end{align*}
  where the last estimate has been shown in Step~1. 
  Using $\bchi = \gradx v-\gradx w$ and Lemma~\ref{lem:embedding}, we further see that
  \begin{align*}
    \|\bchi\|_{H^{1/2}( I;L^2(\Omega)^d)} \lesssim \|v\|_{H^{1/2}(I;H^1(\Omega))} + \|w\|_{H^{1/2}(I;H^1(\Omega))} \lesssim  \|v\|_Y + \|w\|_Y \lesssim \|u-u_h\|_{L^2(Q)}.
  \end{align*}

  \noindent
  \textbf{Step 3 } (\emph{Duality arguments})
  With the solutions $v$ and $\bw = (w,\bchi)$ defined in Step~1--2, we are in position to develop duality arguments. 
  In the following, we use $\dual{\cdot}\cdot$ for the duality bracket of the pairing $L^2( I; H^{-1}(\Omega))\times L^2( I; H_0^1(\Omega))$.
  Integration by parts in time, the identity $\dual{\divx(\bsigma-\bsigma_h)}{v} + \ip{\bsigma-\bsigma_h}{\gradx v}_{L^2(Q)} = 0$,
  and Galerkin orthogonality~\eqref{eq:galerkinorth} yield that
  \begin{align*}
    \|u-u_h\|_{L^2(Q)}^2 &= \ip{u-u_h}{-\partial_t v -\Delta_\bx v}_{L^2(Q)}
    \\
    &=\dual{\partial_t(u-u_h)}{v} + \ip{(u-u_h)(0)}{v(0)}_{L^2(\Omega)}-\ip{(u-u_h)(T)}{v(T)}_{L^2(\Omega)}
    \\
    &\qquad +\ip{\gradx(u-u_h)}{\gradx v}_{L^2(Q)}
    \\
    &= \dual{\partial_t(u-u_h)}{v} + \ip{\gradx(u-u_h)}{\gradx v}_{L^2(Q)} + \ip{(u-u_h)(0)}{v(0)}_{L^2(\Omega)}
    \\
    &= \ip{\divst(\bu-\bu_h)}{v}_{L^2(Q)} + \ip{\gradx(u-u_h)+\bsigma-\bsigma_h}{\gradx v}_{L^2(Q)} 
    \\ &\qquad + \ip{(u-u_h)(0)}{v(0)}_{L^2(\Omega)}
    \\
    &= a(\bu-\bu_h,\bw) = a(\bu-\bu_h,\bw-\bw_h)
  \end{align*}
  for all $\bw_h\in U_h$.

  \noindent
  \textbf{Step 4 } (\emph{Commuting diagram})
  We choose $\bw_h := \iop\bw\in U_h$ in Step 3. Then, $\divst(\bw-\bw_h) = (1-\proj)\divst\bw = (1-\proj)v$ (see \eqref{eq:comm_iop})
  and $(1-\proj)\divst(\bu-\bu_h) = (1-\proj)\divst\bu = (1-\proj)f$ lead to
  \begin{align*}
    a(\bu-\bu_h,\bw-\bw_h) &= \ip{\divst(\bu-\bu_h)}{\divst(\bw-\bw_h)}_{L^2(Q)} 
    \\
    &\qquad + \ip{\gradx(u-u_h)+\bsigma-\bsigma_h}{\gradx (w-w_h)+\bchi-\bchi_h}_{L^2(Q)} 
    \\ &\qquad + \ip{(u-u_h)(0)}{(w-w_h)(0)}_{L^2(\Omega)}
    \\
    &= \ip{(1-\proj)f}{(1-\proj)v}_{L^2(Q)} 
    \\
    &\qquad + \ip{\gradx(u-u_h)+\bsigma-\bsigma_h}{\gradx (w-w_h)+\bchi-\bchi_h}_{L^2(Q)} 
    \\ &\qquad + \ip{(u-u_h)(0)}{(w-w_h)(0)}_{L^2(\Omega)}
    \\
    &\leq \|(1-\proj)f\|_{L^2(Q)}\|(1-\proj)v\|_{L^2(Q)} \\
    &\qquad + (\|\gradx(u-u_h)\|_{L^2(Q)} +\|\bsigma-\bsigma_h\|_{L^2(Q)})
    \\
    &\qquad\qquad \times (\|\gradx (w-w_h)\|_{L^2(Q)}+ \|\bchi-\bchi_h\|_{L^2(Q)})
    \\ &\qquad + \|(u-u_h)(0)\|_{L^2(\Omega)}\|(w-w_h)(0)\|_{L^2(\Omega)}
    \\
    &\lesssim \|(1-\proj)f\|_{L^2(Q)}\|(1-\proj)v\|_{L^2(Q)}
    \\
    &\qquad+ (\|\gradx(u-u_h)\|_{L^2(Q)} +\|\bsigma-\bsigma_h\|_{L^2(Q)}
    + \|(u-u_h)(0)\|_{L^2(\Omega)}) 
    \\
    &\qquad\qquad \times (\|w-w_h\|_X + \|\bchi-\bchi_h\|_{L^2(Q)}).
  \end{align*}
  In the last step we have used the continuous embedding~\eqref{eq:X2C0}.
  It remains to estimate the terms $\|(1-\proj)v\|_{L^2(Q)}$ and $\|w-w_h\|_X + \|\bchi-\bchi_h\|_{L^2(Q)}$ of the right-hand side, which is done in the next two steps. 

  \noindent
  \textbf{Step 5} (\emph{Estimate of $\|(1-\proj)v\|_{L^2(Q)}$})
  By Step~1, we know that $v\in Y$. Thus, the approximation properties of $\proj$ (Lemma~\ref{lem:L2proj}) give that
  \begin{align*}
    \|(1-\proj)v\|_{L^2(Q)} \lesssim h_\bx^2 \|D_\bx^2 v\|_{L^2(Q)} + h_t\|\partial_t v\|_{L^2(Q)}
    \lesssim (h_\bx^2+h_t)\|u-u_h\|_{L^2(Q)}.
  \end{align*}

  \noindent
  \textbf{Step 6} (\emph{Estimate of $\|w-w_h\|_X + \|\bchi-\bchi_h\|_{L^2(Q)}$})
  Using the definition of $\iop$ and Lemma~\ref{lem:iopU} (more precisely \eqref{eq:app_iop3}), we get that
  \begin{align*}
    \|w-w_h\|_X + \|\bchi-\bchi_h\|_{L^2(Q)} \lesssim \|w-\iop_1 w\|_X + \|\bchi-\iop_2\bchi\|_{L^2(Q)}.
  \end{align*}
  First, an application of Lemma~\ref{lem:iop1} (more precisely~\eqref{eq:app_grad_iop1}) yields that
  \begin{align*}
    \|\gradx(w-\iop_1w)\|_{L^2(Q)} \lesssim (h_\bx + h_t^{1/2}) \|w\|_Y
    \lesssim (h_\bx+h_t^{1/2})\|u-u_h\|_{L^2(Q)}.
  \end{align*}
  In order to estimate $\|\partial_t(w-\iop_1w)\|_{L^2( I;H^{-1}(\Omega))}$, we write (see Step~2) $\partial_t w = v-\divx\bchi$ and use Lemma~\ref{lem:iop1} (more precisely \eqref{eq:app_dt_iop1}) together with Lemma~\ref{lem:projP0time} to bound
  \begin{align*}
    \|\partial_t(w-\iop_1w)\|_{L^2( I;H^{-1}(\Omega))} 
    &\lesssim h_\bx \|\partial_t w\|_{L^2(Q)} + \|(1-\proj_{t,0})v\|_{L^2( I;H^{-1}(\Omega))} 
    \\
    &\qquad +\|(1-\proj_{t,0})\divx\bchi\|_{L^2( I;H^{-1}(\Omega))}
    \\
    &\lesssim h_\bx \|\partial_t w\|_{L^2(Q)} + h_t\|\partial_t v\|_{L^2( I;H^{-1}(\Omega))} 
    + \|(1-\proj_{t,0})\bchi\|_{L^2(Q)}
    \\
    &\lesssim h_\bx \|\partial_t w\|_{L^2(Q)} + h_t\|\partial_t v\|_{L^2(Q)} + h_t^{1/2}\|\bchi\|_{H^{1/2}( I;L^2(\Omega)^d)}
    \\
    &\lesssim (h_\bx + h_t + h_t^{1/2})(\|v\|_Y + \|w\|_Y) \lesssim (h_\bx+h_t^{1/2})\|u-u_h\|_{L^2(Q)}.
  \end{align*}
  For the remaining term $\|\bchi-\iop_2\bchi\|_{L^2(Q)}$, we employ Lemma~\ref{lem:iop2} (more precisely~\eqref{eq:app_iop2}) and $\bchi=\gradx v - \gradx w$ to obtain that
  \begin{align*}
    \|\bchi-\iop_2\bchi\|_{L^2(Q)} \lesssim h_\bx \|\gradx\bchi\|_{L^2(Q)}
    + h_t^{1/2} \|\bchi\|_{H^{1/2}( I;L^2(\Omega)^d)} \lesssim (h_\bx + h_t^{1/2})\|u-u_h\|_{L^2(Q)}.
  \end{align*}

  \noindent
  \textbf{Step 7} (\emph{Conclusion})
  In the final step, we combine the aforegoing results and conclude that
  \begin{align*}
    \|u-u_h\|_{L^2(Q)}^2 &\lesssim \|(1-\proj)f\|_{L^2(Q)}\|(1-\proj)v\|_{L^2(Q)}
    \\
    &\qquad (\|\gradx(u-u_h)\|_{L^2(Q)} +\|\bsigma-\bsigma_h\|_{L^2(Q)}
    + \|(u-u_h)(0)\|_{L^2(\Omega)}) 
    \\
    &\qquad\qquad \times (\|w-w_h\|_X + \|\bchi-\bchi_h\|_{L^2(Q)})
    \\
    &\lesssim \|u-u_h\|_{L^2(Q)}\Big((h_\bx^2+h_t)\|(1-\proj)f\|_{L^2(Q)} 
      \\
      &\qquad+(h_\bx+h_t^{1/2})(\|\gradx(u-u_h)\|_{L^2(Q)} +\|\bsigma-\bsigma_h\|_{L^2(Q)}
    + \|(u-u_h)(0)\|_{L^2(\Omega)})\Big).
  \end{align*}

  By \eqref{eq:X2C0} and \eqref{eq:X2U}, we have that
  \begin{align*}
    \|\gradx(u-u_h)\|_{L^2(Q)} + \|\bsigma-\bsigma_h\|_{L^2(Q)} + \|u(0)-u_h(0)\|_{L^2(\Omega)} \lesssim \|\bu-\bu_h\|_U.
  \end{align*}
  Furthermore, using the properties of the $L^2(Q)$ projection $\proj$, we see that
  \begin{align*}
    \|(1-\proj)f\|_{L^2(Q)} &= \|(1-\proj)\divst\bu\|_{L^2(Q)} = \|(1-\proj)\divst(\bu-\bu_h)\|_{L^2(Q)}
    \\
    &\leq \|\divst(\bu-\bu_h)\|_{L^2(Q)} \leq \|\bu-\bu_h\|_U.
  \end{align*}
  This finishes the proof.
\end{proof}

Our second main result states a higher-order conservation property.
\begin{theorem}\label{thm:conservation}
  Suppose that $\Omega$ is convex.
  Let $\bu\in U$ and $\bu_h\in U_h$ denote the solution of~\eqref{eq:minprob} and~\eqref{eq:fosls}, respectively.
 Then, 
  \begin{align*}
    \|\proj f - \divst\bu_h\|_{L^2(Q)} &\lesssim (h_\bx + h_t^{1/2})  
    (\|\gradx(u-u_h)\|_{L^2(Q)} + \|\bsigma-\bsigma_h\|_{L^2(Q)} + \|u(0)-u_h(0)\|_{L^2(\Omega)})
  \end{align*}
  and, in particular, if $h_t\eqsim h_\bx^2$, then
  \begin{align*}
    \|\proj f - \divst\bu_h\|_{L^2(Q)} &\lesssim h_\bx (\|\gradx(u-u_h)\|_{L^2(Q)} + \|\bsigma-\bsigma_h\|_{L^2(Q)} + \|u(0)-u_h(0)\|_{L^2(\Omega)}).
  \end{align*}
\end{theorem}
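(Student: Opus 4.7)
I plan to adapt the duality strategy of Theorem~\ref{thm:L2est}, choosing the dual problem so that the quantity $g := \proj f - \divst\bu_h$ appears directly in the divergence slot of the bilinear form. A first remark is that $\divst\bu_h\in\ran(\proj)$ while $\divst\bu=f$; hence $g=\proj\divst(\bu-\bu_h)$ lies in $\ran(\proj)$ and $\|g\|_{L^2(Q)}^2 = \ip{g}{\divst(\bu-\bu_h)}_{L^2(Q)}$. To express this via the bilinear form, I take the \emph{forward} heat equation as the dual problem: let $w\in X$ solve $\partial_t w-\Delta_\bx w=g$ with $w(0)=0$ and $w|_{I\times\partial\Omega}=0$, and set $\bchi:=-\gradx w$, $\bw:=(w,\bchi)\in U$. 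Parabolic regularity (Proposition~\ref{prop:parabolicreg}) together with Lemma~\ref{lem:embedding} yield $w\in Y$ and $\|w\|_Y+\|\bchi\|_{L^2(I;H^1(\Omega)^d)}+\|\bchi\|_{H^{1/2}(I;L^2(\Omega)^d)}\lesssim\|g\|_{L^2(Q)}$. By construction $\divst\bw=g$, $\gradx w+\bchi=0$, and $w(0)=0$, so $a(\bu-\bu_h,\bw)=\ip{\divst(\bu-\bu_h)}{g}_{L^2(Q)}=\|g\|_{L^2(Q)}^2$.

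Applying Galerkin orthogonality~\eqref{eq:galerkinorth} with $\bw_h:=\iop\bw$, the commuting diagram~\eqref{eq:comm_iop} gives the crucial cancellation $\divst(\bw-\iop\bw)=(1-\proj)\divst\bw=(1-\proj)g=0$, since $g\in\ran(\proj)$. Consequently the divergence contribution in $a(\bu-\bu_h,\bw-\iop\bw)$ disappears and I obtain
\begin{align*}
\|g\|_{L^2(Q)}^2
  &\le (\|\gradx(u-u_h)\|_{L^2(Q)}+\|\bsigma-\bsigma_h\|_{L^2(Q)}) \\
  &\qquad\times (\|\gradx(w-\iop_1 w)\|_{L^2(Q)}+\|\bchi-\iop_3\bw\|_{L^2(Q)}) \\
  &\qquad + \|(u-u_h)(0)\|_{L^2(\Omega)}\,\|(w-\iop_1 w)(0)\|_{L^2(\Omega)}.
\end{align*}

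It now remains to show that every factor involving $\bw$ is of order $(h_\bx+h_t^{1/2})\|g\|_{L^2(Q)}$, essentially repeating Step~6 of the proof of Theorem~\ref{thm:L2est}. Lemma~\ref{lem:iop1} provides $\|\gradx(w-\iop_1 w)\|_{L^2(Q)}\lesssim (h_\bx+h_t^{1/2})\|w\|_Y$, while \eqref{eq:app_iop3}, Lemma~\ref{lem:iop2}, and Lemma~\ref{lem:embedding} handle $\|\bchi-\iop_3\bw\|_{L^2(Q)}$. The one subtlety is the term $\|(1-\proj_{t,k-1})\partial_t w\|_{L^2(I;H^{-1}(\Omega))}$ appearing in \eqref{eq:app_dt_iop1}: substituting $\partial_t w=g-\divx\bchi$ and observing that $\proj_{t,k-1}g=g$ (since $g\in P_{k-1}(\cK_t)\otimes P_\ell(\cK_\bx)\subset P_{k-1}(\cK_t;L^2(\Omega))$), this reduces via Lemma~\ref{lem:projP0time} to $\|\divx(1-\proj_{t,k-1})\bchi\|_{L^2(I;H^{-1}(\Omega))}\lesssim h_t^{1/2}\|\bchi\|_{H^{1/2}(I;L^2(\Omega)^d)}$. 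The initial-trace factor $\|(w-\iop_1 w)(0)\|_{L^2(\Omega)}$ is controlled through the embedding~\eqref{eq:X2C0} by $\|w-\iop_1 w\|_X$, which inherits the same $(h_\bx+h_t^{1/2})\|g\|_{L^2(Q)}$ rate from the preceding estimates. Dividing the combined inequality by $\|g\|_{L^2(Q)}$ yields the first bound, and the case $h_t\eqsim h_\bx^2$ is immediate. The main conceptual step, and the only real obstacle to spotting the proof, is the choice of the dual problem: a \emph{forward} heat equation with \emph{zero} initial data makes the divergence term vanish \emph{exactly} (not merely higher order, as in Theorem~\ref{thm:L2est}), which is precisely what upgrades the $L^2(Q)$ error bound into a conservation statement at the same rate.
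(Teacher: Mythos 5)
Your proposal is correct and follows essentially the same route as the paper's own proof: both use the \emph{forward} heat equation with right-hand side $\proj f - \divst\bu_h$ and zero initial data as the dual problem, exploit that $\proj f - \divst\bu_h\in\ran(\proj)$ together with the commuting diagram property~\eqref{eq:comm_iop} to make the divergence term vanish identically, and then reuse the interpolation estimates from Step~6 of Theorem~\ref{thm:L2est} (including the substitution $\partial_t w = g - \divx\bchi$ and the fact that $\proj_{t,k-1}g=g$). Your closing remark correctly identifies the exact cancellation of the divergence term as the conceptual difference from Theorem~\ref{thm:L2est}.
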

\begin{proof}
  We start by defining $\bv = (v,\btau)\in U$ with $\btau := -\gradx v$ and $v\in X$ the solution of
  \begin{align*}
    \partial_t v -\Delta_\bx v &= \proj f -\divst\bu_h, \\
    v(0) &= 0, \\
    v|_{I\times\partial\Omega} &= 0. 
  \end{align*}
  By parabolic regularity (Proposition~\ref{prop:parabolicreg}), we have $v\in Y$ and 
  \begin{align*}
    \|v\|_Y + \|\btau\|_{H^{1/2}( I;L^2(\Omega)^d)} \lesssim \|v\|_Y \lesssim \|\proj f -\divst\bu_h\|_{L^2(Q)}.
  \end{align*}
  Note that $\ip{\proj f -\divst\bu_h}{p}_{L^2(Q)} = \ip{f-\divst\bu_h}p_{L^2(Q)}$ for $p\in P_{k-1}(\cK_t)\otimes P_\ell(\cK_\bx)$.
  Together with $f = \divst\bu$ and Galerkin orthogonality~\eqref{eq:galerkinorth}, we infer that
  \begin{align*}
    \|\proj f-\divst\bu_h\|_{L^2(Q)}^2 &= \ip{\proj f-\divst\bu_h}{\partial_t-\Delta_\bx v}_{L^2(Q)}
    \\
    &= \ip{\divst(\bu-\bu_h)}{\divst\bv}_{L^2(Q)} + \ip{\gradx(u-u_h)+\bsigma-\bsigma_h}{\gradx v +\btau}_{L^2(Q)}
    \\
    &\qquad + \ip{(u-u_h)(0)}{v(0)}_{L^2(\Omega)}
    \\
    &= a(\bu-\bu_h,\bv) = a(\bu-\bu_h,\bv-\bv_h)
  \end{align*}
  for all $\bv_h\in U_h$. 
  Choosing $\bv_h := \iop\bv$, the commuting-diagram property~\eqref{eq:comm_iop} implies $\divst(\bv-\bv_h) = (1-\proj)\divst\bv = 0$ so that
  \begin{align*}
    a(\bu-\bu_h,\bv-\bv_h) &= \ip{\gradx(u-u_h)+\bsigma-\bsigma_h}{\gradx (v-v_h) +\btau-\btau_h}_{L^2(Q)}
    \\
    &\qquad + \ip{(u-u_h)(0)}{(v-v_h)(0)}_{L^2(\Omega)}
    \\
    &\lesssim (\|\gradx(u-u_h)\|_{L^2(Q)} + \|\bsigma-\bsigma_h\|_{L^2(Q)} + \|(u-u_h)(0)\|_{L^2(\Omega)})
    \\
    &\qquad \times (\|v-v_h\|_X + \|\btau-\btau_h\|_{L^2(Q)}).
  \end{align*}
  It only remains to estimate $\|v-v_h\|_X + \|\btau-\btau_h\|_{L^2(Q)}$. This can be done as in Step~6 of the proof of Theorem~\ref{thm:L2est}. We first obtain that
  \begin{align*}
    \|\gradx(v-v_h)\|_{L^2(Q)} + \|\btau-\btau_h\|_{L^2(Q)} \lesssim (h_\bx + h_t^{1/2})\|\proj f - \divst\bu_h\|_{L^2(Q)}.
  \end{align*}
  The treatment of the term $\|\partial_t(v-v_h)\|_{L^2(I;H^{-1}(\Omega))}$ requires an intermediate step.
  With $\partial_t v = \proj f-\divst\bu_h -\divx\btau$ and $(1-\proj_{t,k-1})(\proj f-\divst\bu_h)=0$, we get by~\eqref{eq:app_dt_iop1} that
  \begin{align*}
  \|\partial_t(v-v_h)\|_{L^2(I;H^{-1}(\Omega))} &\lesssim h_\bx \|\partial_t v\|_{L^2(Q)} + \|(1-\proj_{t,k-1})\divx\btau\|_{L^2(I;H^{-1}(\Omega))}.
\end{align*}
Now, $\|\partial_t(v-v_h)\|_{L^2(I;H^{-1}(\Omega))}\lesssim (h_\bx+h_t^{1/2})\|\proj f-\divst\bu_h\|_{L^2(Q)}$ follows with the remaining arguments of Step~6, which finishes the proof.
\end{proof}

\section{$L^\infty(I;L^2(\Omega))$ error estimate}\label{sec:observations}
Numerical experiments indicate that $\|\bsigma-\bsigma_h\|_{L^2(Q)}$ and $\|u(T)-u_h(T)\|_{L^2(\Omega)}$ with $\bu = (u,\bsigma)$ the solution to~\eqref{eq:EL} and $\bu_h=(u_h,\bsigma_h)\in U_h$ the solution to~\eqref{eq:EL:disc}, converge at a higher rate than the overall error $\|\bu-\bu_h\|_U$. 
While we have not found a proof for this observation, we have the following partial explanation. 

\begin{proposition}\label{prop:EndTime}
  Let $\bu\in U$ and $\bu_h\in U_h$ denote the solution of~\eqref{eq:minprob} and~\eqref{eq:fosls}, respectively.
  Then,
  \begin{align*}
    \max_{\tau\in[0,T]}\|u(\tau)-u_h(\tau)\|_{L^2(\Omega)}^2 &\lesssim \|f-\divst\bu_h\|_{L^2(Q)}\|u-u_h\|_{L^2(Q)} + \|u-u_h\|_{L^2(Q)}^2 \\
    &\qquad + 
    \|\gradx(u-u_h)\|_{L^2(Q)}\|\bsigma-\bsigma_h\|_{L^2(Q)}\\
    &\lesssim \|\bu-\bu_h\|_U(\|u-u_h\|_{L^2(Q)} + \|\bsigma-\bsigma_h\|_{L^2(Q)}).
  \end{align*}
\end{proposition}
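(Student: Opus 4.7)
The plan is to bound $\|e(\tau)\|_{L^2(\Omega)}^2$, with $e:=u-u_h\in X$, by means of the classical energy identity in the Gelfand triple $H_0^1(\Omega)\subset L^2(\Omega)\subset H^{-1}(\Omega)$,
\begin{equation*}
\|e(\tau)\|_{L^2(\Omega)}^2 - \|e(s)\|_{L^2(\Omega)}^2 = 2\int_s^\tau \dual{\partial_t e(r)}{e(r)}\,\di r \qquad \forall\, s,\tau\in\overline I,
\end{equation*}
which is available because $e\in X\hookrightarrow C^0(\overline I;L^2(\Omega))$ by \eqref{eq:X2C0}; see, e.g., \cite[Ch.\,XVIII]{DL92}.

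The first step would be to rewrite the duality pairing on the right-hand side. Since $\bu=(u,-\gradx u)$ solves \eqref{eq:heat}, one has $\partial_t u = f - \divx\bsigma$ in $L^2(I;H^{-1}(\Omega))$, while the conforming discrete solution $u_h$ lies in $H^1(I;L^2(\Omega))$ and satisfies $\partial_t u_h = \divst\bu_h - \divx\bsigma_h$ in $L^2(Q)$. Subtraction yields
\begin{equation*}
\partial_t e = (f-\divst\bu_h) - \divx(\bsigma-\bsigma_h) \quad\text{in } L^2(I;H^{-1}(\Omega)),
\end{equation*}
and since $e(r)\in H_0^1(\Omega)$ for a.e.\ $r$, integration by parts in space gives
\begin{equation*}
\dual{\partial_t e(r)}{e(r)} = \ip{f(r)-\divst\bu_h(r)}{e(r)}_{L^2(\Omega)} + \ip{\bsigma(r)-\bsigma_h(r)}{\gradx e(r)}_{L^2(\Omega)}.
\end{equation*}
Cauchy--Schwarz in $L^2(Q)$ then bounds $\int_0^T|\dual{\partial_t e(r)}{e(r)}|\,\di r$ by $\|f-\divst\bu_h\|_{L^2(Q)}\|e\|_{L^2(Q)} + \|\bsigma-\bsigma_h\|_{L^2(Q)}\|\gradx e\|_{L^2(Q)}$, matching the first and third right-hand side contributions of the proposition.

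To produce the middle $\|u-u_h\|_{L^2(Q)}^2$ term (and to avoid any dependence on $\|e(0)\|_{L^2(\Omega)}$), I would exploit continuity of $s\mapsto\|e(s)\|_{L^2(\Omega)}^2$ together with the mean value theorem for integrals: there exists $s_0\in[0,T]$ with $\|e(s_0)\|_{L^2(\Omega)}^2 = T^{-1}\|e\|_{L^2(Q)}^2$. Using the energy identity with $s=s_0$ and taking the maximum over $\tau\in\overline I$ then yields the first asserted inequality. The second inequality follows since $f=\divst\bu$ implies $\|f-\divst\bu_h\|_{L^2(Q)}\le\|\bu-\bu_h\|_U$, while Poincar\'e's inequality (applied pointwise in time) gives $\|u-u_h\|_{L^2(Q)}\lesssim\|\gradx(u-u_h)\|_{L^2(Q)}\le\|\bu-\bu_h\|_U$, which allows the quadratic middle term to be absorbed into $\|\bu-\bu_h\|_U\cdot\|u-u_h\|_{L^2(Q)}$. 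The only delicate point is the Gelfand-triple energy identity; its validity for functions in $X=L^2(I;H_0^1(\Omega))\cap H^1(I;H^{-1}(\Omega))$ is classical but should be invoked with a precise reference.
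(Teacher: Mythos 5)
Your argument is correct and reaches the same estimate, but via a genuinely different mechanism. The paper instead introduces a scalar weight $\eta_\tau(t)=t/\tau$ (for $\tau\ge T/2$, and $(T-t)/(T-\tau)$ otherwise) and applies the energy identity to $\eta_\tau\,e$: because $\eta_\tau$ vanishes at one endpoint there is no boundary term to control, and the term $\ip{(\partial_t\eta_\tau)e}{e}_{L^2(Q_\tau)}$ supplies exactly the $\|u-u_h\|_{L^2(Q)}^2$ contribution, with $\|\partial_t\eta_\tau\|_{L^\infty}\le 2/T$. You instead apply the unweighted Gelfand-triple identity between $s_0$ and $\tau$, where $s_0$ is produced by the mean value theorem for integrals so that $\|e(s_0)\|_{L^2(\Omega)}^2=T^{-1}\|e\|_{L^2(Q)}^2$; the boundary term itself becomes the middle term. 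Both approaches are valid: yours is a little shorter and avoids the paper's case split on whether $\tau\ge T/2$, while the paper's cutoff trick is a more self-contained test-function construction that does not need the continuity-plus-mean-value argument to select $s_0$. The subsequent steps (rewriting $\partial_t e=(f-\divst\bu_h)-\divx(\bsigma-\bsigma_h)$, integrating the $\divx$ term by parts against $e\in H_0^1(\Omega)$, Cauchy--Schwarz, and Poincar\'e to absorb the middle term) are identical in spirit. Your remark that the Gelfand-triple identity should be invoked with a precise reference is well taken; e.g.\ \cite[Sec.~5.9, Theorem~3]{Evans98} or \cite[Ch.~XVIII]{DL92}.
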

\begin{proof}
  Fix $\tau\geq T/2$ and define $Q_{\tau} := (0,\tau)\times \Omega$ and $\eta_\tau(t,\bx):= t/\tau$ for $(t,\bx)\in\overline Q_\tau = [0,\tau]\times\overline\Omega$.  
    We note that $\eta_\tau(0,\cdot)=0$, $\eta_\tau(\tau,\cdot)=1$, and $\partial_t \eta_\tau =  1/\tau \le 2/T$.
  Integration by parts in time and $\dual{\divx(\bsigma-\bsigma_h)}{\eta_\tau(u-u_h)}_{Q_{\tau}} = -\ip{\bsigma-\bsigma_h}{\eta_\tau\gradx(u-u_h)}_{L^2(Q_{\tau})}$ 
  (here, $\dual\cdot\cdot_{Q_{\tau}}$ denotes the duality pairing of $L^2((0,\tau);H^{-1}(\Omega))\times L^2( (0,\tau);H_0^1(\Omega))$) show that
  \begin{align*}
  \|u(\tau)-u_h(\tau)\|_{L^2(\Omega)}^2 &= \ip{\eta_\tau(\tau)(u-u_h)(\tau)}{(u-u_h)(\tau)}_{L^2(\Omega)}
    \\
  &= \dual{\partial_t(\eta_\tau(u-u_h))}{u-u_h}_{Q_\tau} + \dual{\eta_\tau(u-u_h)}{\partial_t(u-u_h)}_{Q_\tau}
    \\
    &= \ip{\partial_t(\eta_\tau)(u-u_h)}{u-u_h}_{L^2(Q_{\tau})} + \dual{\eta_\tau\partial_t(u-u_h)}{u-u_h}_{Q_\tau} \\ &\qquad+ \dual{\eta_\tau(u-u_h)}{\partial_t(u-u_h)}_{Q_\tau}
  \\
&= \ip{\partial_t(\eta_\tau)(u-u_h)}{u-u_h}_{L^2(Q_{\tau})} + 2\dual{\partial_t(u-u_h)}{\eta_\tau(u-u_h)}_{Q_\tau}
    \\
    &= \ip{\partial_t(\eta_\tau)(u-u_h)}{u-u_h}_{L^2(Q_{\tau})} + 2\ip{\divst(\bu-\bu_h)}{\eta_\tau(u-u_h)}_{L^2(Q_{\tau})} \\
    &\qquad + 2\ip{\bsigma-\bsigma_h}{\eta_\tau\gradx(u-u_h)}_{L^2(Q_{\tau})}.
\end{align*}
  The Cauchy--Schwarz inequality, $\divst\bu = f$, $\|\eta_\tau\|_{L^\infty(Q_{\tau})}=1$, and $\|\partial_t\eta_\tau\|_{L^\infty(Q_{\tau})}\le 2/T$ then show that
  \begin{align*}
    \|u(\tau)-u_h(\tau)\|_{L^2(\Omega)}^2 &\lesssim \|u-u_h\|_{Q_{\tau}}^2 + \|\divst(\bu-\bu_h)\|_{L^2(Q_{\tau})}\|u-u_h\|_{L^2(Q_{\tau})}
    \\
    &\qquad + \|\bsigma-\bsigma_h\|_{L^2(Q_{\tau})}\|\gradx(u-u_h)\|_{L^2(Q_{\tau})}.
  \end{align*}
  Using $f = \divst\bu$ and $\|\cdot\|_{L^2(Q_{\tau})} \leq \|\cdot\|_{L^2(Q)}$ proves the assertion for $\tau\geq T/2$. 
In the case $\tau<T/2$ one argues similarly, defining $Q_{\tau} := (\tau,T)\times \Omega$ and $\eta_\tau(t,\bx):= (T-t)/(T-\tau)$ for $(t,\bx)\in\overline Q_\tau = [\tau,T]\times\overline\Omega$.  
\end{proof}

\begin{remark}\label{rem:sigma}
  Suppose that $\Omega$ is convex.
  By Theorem~\ref{thm:L2est}, we know that $\|u-u_h\|_{L^2(Q)}$ converges at a higher rate than $\|\bu-\bu_h\|_U$. 
  If we assume that $\|\bsigma-\bsigma_h\|_{L^2(Q)}$ converges at a higher rate, then Proposition~\ref{prop:EndTime} states that $\|u(\tau)-u_h(\tau)\|_{L^2(\Omega)}$ converges at a higher rate than $\|\bu-\bu_h\|_U$.
\end{remark}

\begin{remark}
  For certain elliptic problems, it was shown in~\cite[Lemma~4.1 and Theorem~4.4]{Ku11} that the $L^2$ error of the vector-valued variable converges at the same rate as the $L^2$ quasi-interpolation error of the vector-valued variable plus the $L^2$ error of the scalar-valued variable.
  However, the corresponding proof is not directly applicable to FOSLS for parabolic PDEs.
\end{remark}

\section{Notes on quasi-optimality in weaker norms}\label{sec:quasi_optimality}

In this section, we discuss observations on quasi-optimality of approximations in weaker norms. 
The presented results do not rely on duality arguments and are thus independent of the results obtained in the sections above.
In particular, we do not require a parabolic regularity shift.

Let $\projHmOne$ denote a projection operator onto the space $P_{k-1}(\cK_t)\otimes P_\ell(\cK_\bx)$. 
We consider the minimization problem
\begin{align}\label{eq:fosls:modified}
\bu_h = (u_h,\bsigma_h) = \argmin_{\bv_h=(v_h,\btau_h)\in U_h} \|\divst\bv_h-\projHmOne f\|_{L^2(Q)}^2 + 
\|\gradx v_h+\btau_h\|_{L^2(Q)}^2 + \|v_h(0)-u_0\|_{L^2(\Omega)}^2.
\end{align}
Note that problem~\eqref{eq:fosls:modified} is simply~\eqref{eq:fosls} with $f$ replaced by $\projHmOne f$.
The following result follows a similar line of argumentation as in~\cite[Theorem~6]{FuehrerMixedFEMFOSLS}.
For projections bounded in $L^2( I;H^{-1}(\Omega))$, it has already been mentioned briefly in~\cite[Remark~19]{StevensonStorn23}.

\begin{theorem}\label{thm:quasiopt}
  Suppose that $f\in L^2(Q)$ and $u_0\in L^2(\Omega)$.
  Let $u\in X$ denote the solution of~\eqref{eq:heat}, $\bsigma:=-\nabla_x u$, and $\bu_h\in U_h$ the solution of~\eqref{eq:fosls:modified}. 
  Then,
  \begin{align*}
    \|u-u_h\|_X + \|\bsigma-\bsigma_h\|_{L^2(Q)} \lesssim \min_{(v_h,\btau_h)\in U_h}
    \|u-v_h\|_X + \|\bsigma-\btau_h\|_{L^2(Q)} + \|(1-\projHmOne)f\|_{L^2( I;H^{-1}(\Omega))}.
  \end{align*}
  If $\projHmOne$ is bounded in $L^2( I;H^{-1}(\Omega))$, then it holds even for $f\in L^2(I;H^{-1}(\Omega))$ that
  \begin{align*}
    \|u-u_h\|_X + \|\bsigma-\bsigma_h\|_{L^2(Q)} \lesssim \min_{(v_h,\btau_h)\in U_h}
    \|u-v_h\|_X + \|\bsigma-\btau_h\|_{L^2(Q)}.
  \end{align*}
\end{theorem}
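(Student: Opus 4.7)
The plan is to introduce a perturbed continuous problem whose FOSLS approximation is precisely $\bu_h$, apply the standard $U$-norm quasi-optimality of Proposition~\ref{prop:fosls}, and use the quasi-interpolator $\iop$ of Lemma~\ref{lem:iopU} to bound the $U$-norm best approximation of the perturbed exact solution by the weaker $X$-plus-$L^2(Q)$ best approximation of its components. Concretely, let $\tilde u\in X$ denote the weak solution of~\eqref{eq:heat} with $\projHmOne f$ in place of $f$ and the same initial datum $u_0$, and set $\tilde\bsigma:=-\gradx \tilde u$ and $\tilde\bu:=(\tilde u,\tilde\bsigma)\in U$. Since $u-\tilde u\in X$ solves the heat equation with zero initial datum and source $(1-\projHmOne)f$, Theorem~\ref{thm:ss09} together with $\bsigma-\tilde\bsigma=-\gradx(u-\tilde u)$ gives
\begin{align*}
  \|u-\tilde u\|_X + \|\bsigma-\tilde\bsigma\|_{L^2(Q)}\lesssim \|(1-\projHmOne)f\|_{L^2(I;H^{-1}(\Omega))}.
\end{align*}
Moreover, since $\bu_h$ is the FOSLS approximation of $\tilde\bu$, the quasi-optimality~\eqref{eq:quasiopt} yields $\|\tilde\bu-\bu_h\|_U\lesssim \min_{\bv_h\in U_h}\|\tilde\bu-\bv_h\|_U$.

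The core step is to bound this minimum by the weaker-norm best approximation, and here the commuting-diagram property~\eqref{eq:comm_iop} is crucial: because $\divst\tilde\bu=\projHmOne f$ already lies in $P_{k-1}(\cK_t)\otimes P_\ell(\cK_\bx)=\divst(U_h)$, we have $\divst\iop\tilde\bu=\proj\divst\tilde\bu=\divst\tilde\bu$, so the divergence contribution to $\|\tilde\bu-\iop\tilde\bu\|_U$ vanishes. For any $\bv_h=(v_h,\btau_h)\in U_h$, using that $\iop$ is a projection onto $U_h$ (whence $\iop_1 v_h=v_h$ and $\iop_3\bv_h=\btau_h$), one writes $\tilde u-\iop_1\tilde u=(1-\iop_1)(\tilde u-v_h)$ and $\tilde\bsigma-\iop_3\tilde\bu=(\tilde\bsigma-\btau_h)-\iop_3(\tilde\bu-\bv_h)$. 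The $X$-stability~\eqref{eq:grad_iop1}--\eqref{eq:dt_iop1} of $\iop_1$, the $L^2(Q)$-stability~\eqref{eq:l2_iop2} of $\iop_2$, and the boundedness of the nonlocal operator $\gradx(-\Delta_\bx)^{-1}\partial_t:X\to L^2(Q)^d$ appearing inside $\iop_3$ then combine to give
\begin{align*}
  \|\tilde\bu-\iop\tilde\bu\|_U\lesssim \|\tilde u-v_h\|_X + \|\tilde\bsigma-\btau_h\|_{L^2(Q)}.
\end{align*}

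To finish, I combine the above via~\eqref{eq:X2U} and two triangle inequalities (the second to swap $(\tilde u,\tilde\bsigma)$ back for $(u,\bsigma)$), which establishes the first claim. For the sharper statement, assume that $\projHmOne$ is bounded in $L^2(I;H^{-1}(\Omega))$. Since $\divst\bv_h\in P_{k-1}(\cK_t)\otimes P_\ell(\cK_\bx)$ for every $\bv_h\in U_h$, we have $(1-\projHmOne)\divst\bv_h=0$; writing $f=\divst\bu$ and invoking the continuity of $\partial_t:X\to L^2(I;H^{-1}(\Omega))$ and $\divx:L^2(Q)^d\to L^2(I;H^{-1}(\Omega))$ then gives
\begin{align*}
  \|(1-\projHmOne)f\|_{L^2(I;H^{-1}(\Omega))}=\|(1-\projHmOne)\divst(\bu-\bv_h)\|_{L^2(I;H^{-1}(\Omega))}\lesssim \|u-v_h\|_X + \|\bsigma-\btau_h\|_{L^2(Q)},
\end{align*}
so the perturbation term is absorbed into the right-hand side. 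The main obstacle is the second paragraph, where one has to verify carefully that the existing stability estimates for $\iop_1$ and $\iop_2$ are strong enough to tame the nonlocal operator $\gradx(-\Delta_\bx)^{-1}\partial_t$ appearing inside $\iop_3$; this is exactly what allows the whole argument to proceed without any parabolic regularity shift.
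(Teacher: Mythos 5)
Your proposal is correct and follows essentially the same route as the paper's proof: introduce the perturbed exact solution (the paper calls it $\bu^h$, you call it $\tilde\bu$) driven by $\projHmOne f$, bound the perturbation via the stability estimate~\eqref{eq:X2RHS}, invoke FOSLS quasi-optimality~\eqref{eq:quasiopt}, use the projection property and commuting diagram~\eqref{eq:comm_iop} of $\iop$ to kill the divergence term (since $\projHmOne f\in\divst(U_h)$), and bound the remaining terms via the stability of $\iop_1$ and $\iop_2$ (your boundedness of $\gradx(-\Delta_\bx)^{-1}\partial_t$ step is precisely what yields~\eqref{eq:app_iop3}, which the paper cites directly), then close with~\eqref{eq:X2U}, triangle inequalities, and the same absorption argument for the stable-projection case.
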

\begin{proof}
  Define $u^h\in X$ as the solution to heat equation
  \begin{align*}
    \partial_t u^h - \Delta_\bx u^h &= \projHmOne f, \\
    u^h(0) &= u_0, \\
    u^h|_{{I\times\partial\Omega}} &= 0,
  \end{align*}
  and set $\bsigma^h := -\gradx u^h$. By stability of the inverse of the heat operator~\eqref{eq:X2RHS}, we obtain that
  \begin{align}\label{eq:proofquasiopt:a}
    \|u-u^h\|_X + \|\bsigma-\bsigma^h\|_{L^2(Q)} \lesssim \|(1-\projHmOne)f\|_{L^2( I;H^{-1}(\Omega))}.
  \end{align}
  Note that the solution $\bu_h = (u_h,\bsigma_h)\in U_h$ of~\eqref{eq:fosls:modified} is an approximation to $\bu^h = (u^h,\bsigma^h)\in U$ and by quasi-optimality of the FOSLS, we infer that
  \begin{align*}
    \|u^h-u_h\|_X + \|\bsigma^h-\bsigma_h\|_{L^2(Q)} \lesssim \min_{\bv_h\in U_h} \|\bu^h-\bv_h\|_U.
  \end{align*}
  The last observation together with $\bu^h-\iop \bu^h = (1-\iop)(\bu^h-\bv_h)$ for any $\bv_h = (v_h,\btau_h)\in U_h$, $\divst(1-\iop)(\bu^h-\bv_h) = \divst(\bu^h-\iop\bu^h) = \projHmOne f - \proj\projHmOne f = 0$ (see~\eqref{eq:comm_iop}), boundedness~\eqref{eq:grad_iop1}, \eqref{eq:dt_iop1}, \eqref{eq:l2_iop2} \& \eqref{eq:app_iop3} of the involved interpolation operators, and \eqref{eq:proofquasiopt:a} proves that
  \begin{align*}
    \|u^h-u_h\|_X + \|\bsigma^h-\bsigma_h\|_{L^2(Q)} &\lesssim \|(1-\iop)(\bu^h-\bv_h)\|_U 
    \\
    &\lesssim \|\gradx(1-\iop_1)(u^h-v_h)\|_{L^2(Q)} + \| (1-\iop_3) (\bsigma^h - \btau)\|_{L^2(Q)} 
    \\
    &\lesssim \|\gradx(u^h-v_h)\|_{L^2(Q)} + \|\bsigma^h-\btau_h\|_{L^2(Q)} + \|\partial_t(u^h-v_h)\|_{L^2(I;H^{-1}(\Omega))}
    \\
    &\lesssim \|u-v_h\|_X + \|\bsigma-\btau_h\|_{L^2(Q)} + \|u-u^h\|_X + \|\bsigma-\bsigma^h\|_{L^2(Q)}
    \\
    &\lesssim \|u-v_h\|_X + \|\bsigma-\btau_h\|_{L^2(Q)} + \|(1-\projHmOne)f\|_{L^2( I; H^{-1}(\Omega))}.
  \end{align*}
  This shows the first statement. 

  The statement is still true with the same argumentation if $\projHmOne$ is bounded in $L^2( I; H^{-1}(\Omega))$ and $f\in L^2( I; H^{-1}(\Omega))$. 
  It remains to verify that
  \begin{align*}
    \|(1-\projHmOne)f\|_{L^2( I;H^{-1}(\Omega))} \lesssim \min_{(v_h,\btau_h)\in U_h}
    \|u-v_h\|_X + \|\bsigma-\btau_h\|_{L^2(Q)}
  \end{align*}
  Let $\bv_h =(v_h,\btau_h) \in U_h$ be given. Using $f = \divst\bu$, boundedness of $\projHmOne$, and $\divx\colon L^2(Q)^d \to L^2( I; H^{-1}(\Omega))$, we conclude that
  \begin{align*}
    \|(1-\projHmOne)f\|_{L^2( I;H^{-1}(\Omega))} &= \|(1-\projHmOne)(\divst\bu-\divst\bv_h)\|_{L^2( I;H^{-1}(\Omega))}
    \\
    &\lesssim \|\divst(\bu-\bv_h)\|_{L^2( I; H^{-1}(\Omega))} 
    \\
    &\leq \|\partial_t(u-v_h)\|_{L^2( I;H^{-1}(\Omega))} 
    + \|\divx(\bsigma-\btau_h)\|_{L^2( I;H^{-1}(\Omega))}
    \\
    &\lesssim \|u-v_h\|_X + \|\bsigma-\btau_h\|_{L^2(Q)}.
  \end{align*}
  Since $(v_h,\btau_h)\in U_h$ was arbitrary, this finishes the proof.
\end{proof}

\begin{remark}
  Again, the result can be readily extended to general second-order parabolic PDEs; see~\cite{GantnerStevenson21} for the least-squares formulations.
\end{remark}

\begin{remark}
  If we choose $\projHmOne = \proj$, then minimization problem~\eqref{eq:fosls:modified} is equivalent to~\eqref{eq:fosls}, since, 
  \begin{align*}
    \bu_h = (u_h,\bsigma_h) &= \argmin_{\bv=(v,\btau)\in U_h} \|\divst\bv- f\|_{L^2(Q)}^2 + 
    \|\gradx v+\btau\|_{L^2(Q)}^2 + \|v(0)-u_0\|_{L^2(\Omega)}^2\\
    &= \argmin_{\bv=(v,\btau)\in U_h} \|\divst\bv-\proj f\|_{L^2(Q)}^2 + 
    \|\gradx v+\btau\|_{L^2(Q)}^2 + \|v(0)-u_0\|_{L^2(\Omega)}^2\\
    &\qquad + \|(1-\proj)f\|_{L^2(Q)}^2 \\
    &= \argmin_{\bv=(v,\btau)\in U_h} \|\divst\bv-\projHmOne f\|_{L^2(Q)}^2 + 
    \|\gradx v+\btau\|_{L^2(Q)}^2 + \|v(0)-u_0\|_{L^2(\Omega)}^2.
  \end{align*}
  Therefore, Theorem~\ref{thm:quasiopt} states that the FOSLS~\eqref{eq:fosls} is almost (up to the data oscillation term $\|(1-\proj)f\|_{L^2(Q)}$) quasi-optimal in a norm which is weaker than $\|\cdot\|_U$.
\end{remark}

\begin{remark}
  One possibility to construct an $L^2( I;H^{-1}(\Omega))$-stable operator $\projHmOne$ is to combine $\proj_{t,k-1}$, $k\in\N$, with an $H^{-1}(\Omega)$-stable projection from $H^{-1}(\Omega)$ to $P_{\ell}(\cK_\bx)$, $\ell\in\N_0$, see, e.g.,~\cite{FuehrerMixedFEMFOSLS} for the lowest-order case $\ell=0$ and~\cite{MR4570332} for $\ell\geq 1$.
\end{remark}

\section{Numerical experiments}\label{sec:numerics}

In this section, we numerically verify our theoretical findings. 
We apply the space-time FOSLS 
\begin{align*}
  \bu_h = (u_h,\bsigma_h) = \argmin_{\bv=(v,\btau)\in U_h} \|\divst\bv-f\|_{L^2(Q)}^2 + 
  \|\gradx v+\btau\|_{L^2(Q)}^2 + \|v(0)-u_0\|_{L^2(\Omega)}^2
\end{align*}
(presented in detail in Section~\ref{sec:fosls}) to prescribed solutions of the heat equation on the unit interval $\Omega = (0,1)$ and the unit square $\Omega = (0,1)^2$, respectively, with end time $T=1$. 
As ansatz space, we consider 
\begin{align*}
  U_h := S_{1}(\cK_t)\otimes S_{1,0}(\cK_\bx) \times P_{0}(\cK_t)\otimes \RT_1(\cK_\bx),
\end{align*}
i.e., we choose $k=1$ and $\ell=1$ in \eqref{eq:U_h}, 
on uniformly refined tensor-meshes $\cK$. 
We choose initial meshes $\cK_\bx:=\{(0,1)\}$ and $\cK_\bx:=\{{\rm conv}\{(0,0),(1,0),(0,1)\},$ ${\rm conv}\{(1,0),(1,1),(0,1)\}\}$, respectively, as well as $\cK_t:=\{(0,1)\}$.
Here, ${\rm conv}$ denotes the open convex hulls of a set of points. 
In spatial direction, we employ bisection and red refinement (i.e., triangles are split into four new triangles by connecting the midpoints of their edges), respectively. 
In temporal direction, we employ either one bisection or two bisections to generate either a sequence of uniform \emph{equally} scaled meshes, i.e., $h_t \eqsim h_\bx$, or a sequence of uniform \emph{parabolically} scaled meshes, i.e., $h_t\eqsim h_\bx^2$. 
We write $s:=1$ for equally scaled meshes and $s:=2$ for parabolically scaled meshes. 
Then, we have that $h_t\eqsim h_\bx^s$. 

For smooth solutions $\bu = (u,-\gradx u)$, we expect the convergence rate 
\begin{align}\label{eq:a_priori}
  \| \bu - \bu_h\|_U = \mathcal{O}(h_\bx) = \mathcal{O}({\rm dofs}^{-1/(d+s)}),
\end{align}
where ${\rm dofs}$ denotes the number of degrees of freedom, i.e., the dimension of $U_h$; cf.~\cite{GantnerStevenson24}.
We compute the (squared) least-squares error 
\begin{align*}
  LS(f,u_0;\bu_h)^2 := \| f- \divst\bu_h\|_{L^2(Q)}^2 + \|\gradx u_h+\bsigma_h\|_{L^2(Q)}^2 + \|u_0 - u_h(0)\|_{L^2(\Omega)}^2
  =a(\bu - \bu_h,\bu - \bu_h),
\end{align*}
which is equivalent to the (squared) error $\| \bu - \bu_h\|_U^2$ by Proposition~\ref{prop:fosls}, 
the $L^2$ errors 
\begin{align*}
  \| u - u_h \|_{L^2(Q)}, \quad  \| \bsigma - \bsigma_h \|_{L^2(Q)},   \quad \| u(T) - u_h(T) \|_{L^2(\Omega)}, 
\end{align*}
and 
\begin{align*}
  \| \Pi f -\divst\bu_h\|_{L^2(Q)}. 
\end{align*}
According to Theorem~\ref{thm:L2est} and~\ref{thm:conservation}, we have for smooth solutions that
\begin{align}\label{eq:a_priori_l2}
  \| u - u_h \|_{L^2(Q)} = \mathcal{O}(h_\bx^2) = \mathcal{O}({\rm dofs}^{-2/(d+s)}) = \| \Pi f -\divst\bu_h\|_{L^2(Q)}.
\end{align}
In particular, from standard approximation theory, we thus have for smooth $f$ that 
\begin{align}\label{eq:a_priori_f}
  \| f -\divst\bu_h\|_{L^2(Q)} = \mathcal{O}(h_\bx^s) = \mathcal{O}({\rm dofs}^{-s/(d+s)}).
\end{align}
Finally, since $\bsigma_h\in P_{0}(\cK_t)\otimes \RT_1(\cK_\bx)$, we may ``at best'' expect that
\begin{align}\label{eq:a_priori_sigma}
  \| \bsigma - \bsigma_h \|_{L^2(Q)} = \mathcal{O}(h_\bx^s) = \mathcal{O}({\rm dofs}^{-s/(d+s)}).
\end{align}
In this case, Proposition~\ref{prop:EndTime} predicts that 
\begin{align}\label{eq:a_priori_uT}
  \| u(0) - u_h(0) \|_{L^2(\Omega)} = \mathcal{O}(h_\bx^{(1+s)/2}) = \mathcal{O}({\rm dofs}^{-(1+s)/2(d+s)}) = \| u(T) - u_h(T) \|_{L^2(\Omega)}.
\end{align}

\definecolor{darkgreen}{rgb}{0.0, 0.5, 0.0}
\definecolor{darkyellow}{rgb}{0.8, 0.7, 0.0}
\newcommand{\plotMyFigure}[5]{
\begin{figure}[ht]
\centering
\begin{tikzpicture}
\pgfplotstableread[col sep=comma]{data/#1D/#2_par#3_pxs#4.csv}{\mydata}
\begin{loglogaxis}[
width = 0.645\textwidth,
xlabel={degrees of freedom},
xmajorgrids=true, ymajorgrids=true, 
legend style={
legend pos=outer north east,
align=left,
fill=none, draw=none,
font=\tiny
}
]
\addplot[red, mark=*, very thick] table[x=dofs,y=estimators]{\mydata};
\addplot[blue, mark=square, very thick] table[x=dofs,y=estimators_f]{\mydata};
\addplot[darkgreen, mark=x, very thick] table[x=dofs,y=estimators_g]{\mydata};
\addplot[magenta, mark=triangle, very thick] table[x=dofs,y=estimators_u0]{\mydata};
\addplot[cyan, mark=diamond, very thick] table[x=dofs,y=estimators_u]{\mydata};
\addplot[blue, mark=square, very thick, dashed, mark options={solid}] table[x=dofs,y=estimators_Pf]{\mydata};
\addplot[darkgreen, mark=x, very thick, dashed, mark options={solid}] table[x=dofs,y=estimators_sigma]{\mydata};
\addplot[darkyellow, mark=triangle, mark options={rotate=180}, very thick] table[x=dofs,y=estimators_u1]{\mydata};

\pgfplotstablegetelem{0}{dofs}\of\mydata
\let\firstdof\pgfplotsretval

\pgfplotstableset{create on use/rowcount/.style={create col/expr={\pgfplotstablerows}}}
\pgfplotstablecreatecol[create col/expr={\pgfplotstablerows}]{dummy}{\mydata}
\pgfplotstablegetelem{0}{dummy}\of\mydata
\let\rowcount\pgfplotsretval
\pgfmathtruncatemacro{\lastrow}{\rowcount - 1}
\pgfplotstablegetelem{\lastrow}{dofs}\of\mydata
\let\lastdof\pgfplotsretval

#5 

\legend{
$LS(u_h{,}\bsigma_h)$,
$\| f -\divst\bu_h\|_{L^2(Q)}$,
$\|\gradx u_h+\bsigma_h\|_{L^2(Q)}$,
$\|(u-u_h)(0)\|_{L^2(\Omega)}$,
$\| u - u_h\|_{L^2(Q)}$,
$\| \Pi f -\divst\bu_h\|_{L^2(Q)}^2$,
$\| \bsigma - \bsigma_h\|_{L^2(Q)}$,
$\|(u-u_h)(T)\|_{L^2(\Omega)}$,
}
\end{loglogaxis}
\end{tikzpicture}
\caption{Convergence plot for \ifnum\pdfstrcmp{#2}{f0_u0hat}=0 non-\fi \ifnum\pdfstrcmp{#2}{square_f0_u0hat}=0 non-\fi smooth solution \ifnum\pdfstrcmp{#1}{1}=0 on unit interval \else on unit square \fi  of Section~\ref{sec:#1d_#2} for \ifnum\pdfstrcmp{#3}{0}=0 equally scaled meshes with $h_t\eqsim h_\bx$\else parabolic meshes with $h_t\eqsim h_\bx^2$\fi. 
}
\label{fig:#1d_#2_par#3_pxs#4}
\end{figure}
}

\subsection{Smooth solution on unit interval}\label{sec:1d_sincos}

We consider the solution of the heat equation~\eqref{eq:heat} 
\begin{align*}
  u(t,x) := \cos(\pi t) \sin(\pi x) \quad \forall (t,x) \in \overline Q = \overline I\times \overline \Omega = [0,1]^2,
\end{align*}
and choose the data $f$ and $u_0$ accordingly. 
In Figure~\ref{fig:1d_sincos_par0_pxs2}--\ref{fig:1d_sincos_par1_pxs2}, we provide the double-logarithmic convergence plots 
on equally scaled and parabolically scaled meshes, respectively. 
In both cases, we observe the expected convergence rate $\mathcal{O}(h_\bx)=\mathcal{O}({\rm dofs}^{-1/(1+s)})$ of~\eqref{eq:a_priori} for the overall least-squares error $LS(f,u_0;\bu_h)$ as well as the component $\|\gradx u_h+\bsigma_h\|_{L^2(Q)}$.
The errors $\| u - u_h \|_{L^2(Q)}$ and $\| \Pi f -\divst\bu_h\|_{L^2(Q)}$ converge both at the double rate $\mathcal{O}(h_\bx^2)=\mathcal{O}({\rm dofs}^{-2/(1+s)})$, which is $1/2$ order of $h_\bx$ better than the expected rate~\eqref{eq:a_priori_l2} for $s=1$. 
We obtain the expected rate $\| f -\divst\bu_h\|_{L^2(Q)} = \mathcal{O}(h_\bx^s) = \mathcal{O}({\rm dofs}^{-s/(d+s)})$ of~\eqref{eq:a_priori_f}. 
For $\| \bsigma - \bsigma_h \|_{L^2(Q)}$, we observe the ``best'' rate $\mathcal{O}(h_\bx^s) = \mathcal{O}({\rm dofs}^{-s/(d+s)})$ of~\eqref{eq:a_priori_sigma}.
In both cases, $\|u(0)-u_h(0)\|_{L^2(\Omega)}$ and $\|u(T)-u_h(T)\|_{L^2(\Omega)}$ converge at rate $\mathcal{O}(h_\bx^2)=\mathcal{O}({\rm dofs}^{-2/(1+s)})$, which is  $1$ or $1/2$ order of $h_\bx$  better than the expected rate~\eqref{eq:a_priori_uT} for $s=1$ and $s=2$, respectively. 


\plotMyFigure{1}{sincos}{0}{2}{
	\addplot[black,dashed,domain=\firstdof:\lastdof] {20*x^(-1/2) };
	\node at (axis cs:1e4,1.0e0) [anchor=north west] {$\mathcal{O}({\rm dofs}^{-1/2})$};
	\addplot[black,dashed,domain=\firstdof:\lastdof] {0.4*x^(-1) };
	\node at (axis cs:2*1e3,1.0e-5) [anchor=north west] {$\mathcal{O}({\rm dofs}^{-1})$};
}


\plotMyFigure{1}{sincos}{1}{2}{
	\addplot[black,dashed,domain=\firstdof:\lastdof] {3.5*x^(-1/3) };
	\node at (axis cs:1e5,3*1.0e-1) [anchor=north west] {$\mathcal{O}({\rm dofs}^{-1/3})$};
	\addplot[black,dashed,domain=\firstdof:\lastdof] {0.4*x^(-2/3) };
	\node at (axis cs:5*1e3,1.0e-4) [anchor=north west] {$\mathcal{O}({\rm dofs}^{-2/3})$};
}

\subsection{Non-smooth solution on unit interval}\label{sec:1d_f0_u0hat}

We choose 
\begin{align*}
  f(t,x) := 0 \quad\forall (t,x)\in\overline Q = \overline I\times \overline \Omega = [0,1]^2
  \quad \text{and} \quad
  u_0(x) := 1-2|x-\tfrac12| \quad\forall x\in\overline \Omega = [0,1].
\end{align*}
The corresponding solution of the heat equation~\eqref{eq:heat} can be expressed as the Fourier series
\begin{align*}
  u(t,x) = \sum_{n=1}^\infty \Big[2 \int_0^1 u_0(y) \sin(n\pi y) \,{\rm d}y\Big]\sin(n\pi x) e^{n^2\pi^2 t}.
\end{align*}
To compute the errors involving $u$ and $\bsigma = -\nabla_\bx u = -\partial_x u$, we compute the series for $n=1,\dots,100$. 
In Figure~\ref{fig:1d_f0_u0hat_par0_pxs2}--\ref{fig:1d_f0_u0hat_par1_pxs2}, we provide the double-logarithmic convergence plots 
on equally scaled and parabolically scaled meshes, respectively. 
For $LS(f,u_0;\bu_h)$ and the component $\|\gradx u_h+\bsigma_h\|_{L^2(Q)}$, we observe the reduced approximate rate $\mathcal{O}(h_\bx^{0.76})=\mathcal{O}({\rm dofs}^{-0.38})$ if $s=1$. 
The errors $\| u - u_h \|_{L^2(Q)}$ and $\| \Pi f -\divst\bu_h\|_{L^2(Q)}=\| f -\divst\bu_h\|_{L^2(Q)}$ converge both at rate $\mathcal{O}(h_\bx^{0,76+1/2})=\mathcal{O}({\rm dofs}^{-0.63})$ for $s=1$ and $\mathcal{O}(h_\bx^2)=\mathcal{O}({\rm dofs}^{-2/3})$ for $s=2$, suggesting that the estimates of Theorem~\ref{thm:L2est} and~\ref{thm:conservation} are indeed sharp. 
For $\| \bsigma - \bsigma_h \|_{L^2(Q)}$, we observe worse rates than~\eqref{eq:a_priori_sigma}, namely, $\mathcal{O}(h_\bx^{0.76}) = \mathcal{O}({\rm dofs}^{-0.38})$ for $s=1$ and $\mathcal{O}(h_\bx^{3/2}) = \mathcal{O}({\rm dofs}^{-1/2})$ for $s=2$.
Proposition~\ref{prop:EndTime} predicts that $\|u(0)-u_h(0)\|_{L^2(\Omega)}$ and $\|u(T)-u_h(T)\|_{L^2(\Omega)}$ converge at rate $\mathcal{O}(h_\bx^{0.76})=\mathcal{O}({\rm dofs}^{-0.38})$ for $s=1$ and $\mathcal{O}(h_\bx^{5/4})=\mathcal{O}({\rm dofs}^{-5/12})$ for $s=2$.
We indeed observe $\|u(0)-u_h(0)\|_{L^2(\Omega)}=\mathcal{O}(h_\bx^{0.76})=\mathcal{O}({\rm dofs}^{-0.38})$ for $s=1$, while we observe $\|u(0)-u_h(0)\|_{L^2(\Omega)}=\mathcal{O}(h_\bx^{3/2})=\mathcal{O}({\rm dofs}^{-1/2})$ for $s=2$. 
In both cases, $\|u(T)-u_h(T)\|_{L^2(\Omega)}$ exhibits the faster rate $\mathcal{O}(h_\bx^2)=\mathcal{O}({\rm dofs}^{-2/(1+s)})$.


\plotMyFigure{1}{f0_u0hat}{0}{2}{	
	\addplot[black,dashed,domain=\firstdof:\lastdof] {2.5*x^(-0.38) };
	\node at (axis cs:1e5,2.5*1.0e-1) [anchor=north west] {$\mathcal{O}({\rm dofs}^{-0.38})$};
	\addplot[black,dashed,domain=\firstdof:\lastdof] {0.09*x^(-0.63) };
	\node at (axis cs:4*1e4,0.9*1.0e-5) [anchor=north west] {$\mathcal{O}({\rm dofs}^{-0.63})$};
	\addplot[black,dashed,domain=\firstdof:\lastdof] {0.002*x^(-1) };
	\node at (axis cs:1e4,1.0e-8) [anchor=north west] {$\mathcal{O}({\rm dofs}^{-1})$};
}


\plotMyFigure{1}{f0_u0hat}{1}{2}{
	\addplot[black,dashed,domain=\firstdof:\lastdof] {x^(-1/3) };
	\node at (axis cs:1e5,1.5*1.0e-1) [anchor=north west] {$\mathcal{O}({\rm dofs}^{-1/3})$};
	\addplot[black,dashed,domain=\firstdof:\lastdof] {0.4*x^(-1/2) };
	\node at (axis cs:4*1e5,2.5*1.0e-4) [anchor=north west] {$\mathcal{O}({\rm dofs}^{-1/2})$};
	\addplot[black,dashed,domain=\firstdof:\lastdof] {0.005*x^(-2/3) };
	\node at (axis cs:1e0,5*1.0e-4) [anchor=north west] {$\mathcal{O}({\rm dofs}^{-2/3})$};
}

\subsection{Smooth solution on unit square}\label{sec:2d_square_sincos}

We consider the solution of the heat equation~\eqref{eq:heat} 
\begin{align*}
  u(t,x_1,x_2) := \cos(\pi t) \sin(\pi x_1) \sin(\pi x_2) \quad \forall (t,x_1,x_2) \in \overline Q = \overline I\times \overline\Omega =[0,1]^3.
\end{align*}
and choose the data $f$ and $u_0$ accordingly. 
In Figure~\ref{fig:2d_square_sincos_par0_pxs2}--\ref{fig:2d_square_sincos_par1_pxs2}, we provide the double-logarithmic convergence plots 
on equally scaled and parabolically scaled meshes, respectively. 
In terms of $h_\bx$, we observe the same convergence rates as in the one-dimensional case of Section~\ref{sec:1d_sincos}. 


\plotMyFigure{2}{square_sincos}{0}{2}{
	\addplot[black,dashed,domain=\firstdof:\lastdof] {20*x^(-1/3) };
	\node at (axis cs:2*1e4,2*1.0e0) [anchor=north west] {$\mathcal{O}({\rm dofs}^{-1/3})$};
	\addplot[black,dashed,domain=\firstdof:\lastdof] {2.5*x^(-2/3) };
	\node at (axis cs:5*1e3,1.0e-3) [anchor=north west] {$\mathcal{O}({\rm dofs}^{-2/3})$};
}


\plotMyFigure{2}{square_sincos}{1}{2}{
	\addplot[black,dashed,domain=\firstdof:\lastdof] {6*x^(-1/4) };
	\node at (axis cs:3*1e4,1.25*1.0e0) [anchor=north west] {$\mathcal{O}({\rm dofs}^{-1/4})$};
	\addplot[black,dashed,domain=\firstdof:\lastdof] {1.5*x^(-1/2) };
	\node at (axis cs:5*1e3,4*1.0e-3) [anchor=north west] {$\mathcal{O}({\rm dofs}^{-1/2})$};
}

\subsection{Non-smooth solution on unit square}\label{sec:2d_square_f0_u0hat}

We choose 
\begin{align*}
  f(t,x_1,x_2) := 0 \quad\forall (t,x_1,x_2)\in\overline Q = \overline I\times \overline \Omega = [0,1]^3
\end{align*}
and
\begin{align*}
  u_0(x_1,x_2) := \big(1-2|x_1-\tfrac12|\big) \big(1-2|x_2-\tfrac12|\big) \quad\forall (x_1,x_2)\in\overline \Omega = [0,1]^2.
\end{align*}
The corresponding solution of the heat equation~\eqref{eq:heat} can be expressed as the Fourier series
\begin{align*}
  u(t,x_1,x_2) = \sum_{n_1=1}^\infty \sum_{n_2=1}^\infty \Big[4 \int_0^1\int_0^1 u_0(y_1,y_2) \sin(n_1\pi y_1) \sin(n_2\pi y_2) \,{\rm d}y_1 \,{\rm d}y_2\Big]\sin(n_1\pi x_1)\sin(n_2\pi x_2) e^{(n_1^2+n_2^2)\pi^2 t}.
\end{align*}
To compute the errors involving $u$ and $\bsigma = -\nabla_\bx u$, we compute the series for $n_1,n_2=1,\dots,100$. 
In Figure~\ref{fig:2d_square_f0_u0hat_par0_pxs2}--\ref{fig:2d_square_f0_u0hat_par1_pxs2}, we provide the double-logarithmic convergence plots 
on equally scaled and parabolically scaled meshes, respectively. 
Once again, in terms of $h_\bx$, we essentially observe the same rates as in the one-dimensional case of Section~\ref{sec:1d_f0_u0hat}. 
Only for $\| u(T) - u_h(T) \|$, no clear rate can be identified in case of $s=1$. 


\plotMyFigure{2}{square_f0_u0hat}{0}{2}{
	\addplot[black,dashed,domain=\firstdof:\lastdof] {2*x^(-0.25) };
  \node at (axis cs:3*1e4,1.5*1.0e0) [anchor=north west] {$\mathcal{O}({\rm dofs}^{-1/4})$};
	\addplot[black,dashed,domain=\firstdof:\lastdof] {0.25*x^(-0.5) };
  \node at (axis cs:5*1e3,7*1.0e-4) [anchor=north west] {$\mathcal{O}({\rm dofs}^{-1/2})$};
}


\plotMyFigure{2}{square_f0_u0hat}{1}{2}{
	\addplot[black,dashed,domain=\firstdof:\lastdof] {1.5*x^(-1/4) };
	\node at (axis cs:1e5,1.5*1.0e0) [anchor=north west] {$\mathcal{O}({\rm dofs}^{-1/4})$};
	\addplot[black,dashed,domain=\firstdof:\lastdof] {0.5*x^(-3/8) };
	\node at (axis cs:7.5*1e5,3*1.0e-3) [anchor=north west] {$\mathcal{O}({\rm dofs}^{-3/8})$};
	\addplot[black,dashed,domain=\firstdof:\lastdof] {0.1*x^(-1/2) };
	\node at (axis cs:1e4,3*1.0e-4) [anchor=north west] {$\mathcal{O}({\rm dofs}^{-1/2})$};
	\addplot[black,dashed,domain=\firstdof:\lastdof] {1e-7*x^(-1/2) };
	\node at (axis cs:1e4,1.0e-8) [anchor=north west] {$\mathcal{O}({\rm dofs}^{-1/2})$};
}

\bibliographystyle{alpha}
\bibliography{literature}

\end{document}